\newtheorem{The}{Theorem}[section]
\newtheorem{Def}{Definition}[section]
\newtheorem{cor}{Corollary}
\numberwithin{equation}{section}
\begin{document}
\begin{center}
{\LARGE {\bf Novel Special Function Obtained from a Delay Differential Equation}}
\vskip 0.5cm
{\Large Sachin Bhalekar, Jayvant Patade\footnote{Corresponding author}}\\
\textit{Department of Mathematics, Shivaji University, Kolhapur - 416004, India.\\ Email:  sachin.math@yahoo.co.in, sbb\_maths@unishivaji.ac.in (Sachin Bhalekar), jayvantpatade1195@gmai.com (Jayvant Patade)}\\
\end{center}

\begin{abstract}
  This paper deals with the series solution of a linear delay differential equation (DDE)
  \begin{equation}
  y'(x) = ay(x)+ by(q x),\quad 0<q<1
  \end{equation}
with proportional delay. We discuss the convergence of this novel series. We establish the relation between the special function given in terms of this series and its differentials. We also discuss the bounds on this function and present the relation with other special functions viz. Kummer's functions, generalized  Laguerre polynomials, incomplete gamma function, beta function and regularized incomplete beta function. Further, we discuss various properties and contiguous relations for the novel special function. Finally, we generalize this series by solving fractional order DDE and a system of DDE.
  
\end{abstract}
Keywords: Daftardar-Gejji and Jafari method, delay differential equation, proportional delay, Gaussian binomial coefficients, convergence.

\textbf{MSC:} 33E99, 34K06, 34K07.

\section{Introduction}
Delay differential equations (DDE) are the systems where the rate of change of function depend on its values at past time. Such equations are proved more realistic than ordinary differential equations (ODE) because they can model memory in the system.
\par Due to the transcendental nature of characteristic equation of DDE, it is more difficult to analyze as compared with ODE. Analysis of DDEs is discussed in \cite{Bellman,Hale,Gopalsamy,Henry}. DDEs with impulses are considered in \cite{Gopalsarny}. State dependent and time dependent delays are analyzed by Winston \cite{Winston} and Burton \cite{Burton} respectively. A more general case of nonlinear DDE is discussed in \cite{Bhalekar} by Bhalekar.
\par Safonov et al \cite{Safonov} used a second order DDE in car-following traffic model. Hutchinson \cite{Hutchinson} proposed a delayed logistic equation to model population dynamics. Ikeda \cite{Ikeda} equation is a DDE used in nonlinear optics. In \cite{Villermaux}, Villermaux showed that the oscillations in unsteady recirculating flows can be modeled by using a DDE. The theory of bifurcation in DDE is very useful in analyzing the fluctuations in economic activities  \cite{Mackey,Boucekkine,Torre}. The delayed feedback controllers are developed by Masoud et al \cite{Masoud} which are useful in mechanical engineering. These equations can also be used to describe infectious diseases  \cite{Ciupe,Cooke,Nelson}. Bhalekar and coworkers \cite{Bhalekar1,Bhalekar2} described fractional order bloch equation arising in NMR.
\par Various special functions viz. exponential, sine, cosine, hypergeometric, Mittag-Leffler, Gamma are obtained from ODEs \cite{Andrews,Mathai,Rainville}. If the equations are with variable coefficients then we may get Legendre polynomial, Lagarre polynomial, Bessel functions and so on \cite{Bell,Sneddon}. However, there are vary few papers which are  devoted to  the special functions arising in DDEs \cite{Corless}. This motivates us to work on the special functions emerging from the solution of DDE with proportional delay $ y'(x) = ay(x)+ by(q x)$. We analyze different properties of such new special function and present the relationship with other functions. The paper is organized as below: 

\par The preliminaries are given in section \ref{Pre}. A new special function is presented in Section \ref{nsf}. Analysis of new function is given in Section \ref{analy}. Section \ref{Prop} deals with different types of properties and relations.  In Section \ref{gen}  we present the generalization of  new function and conclusions are summarized in Section \ref{concl}.

\section{Preliminaries}\label{Pre}
\subsection{Basic Definitions and Results:}
In this section, we discuss some basic definitions and results \cite{ Magnus,Luchko, Kilbas, Podlubny, Samko, Kac}.
\begin{Def}\label{6.1.1}
The Gaussian binomial coefficients are defined by
\begin{equation}
\dbinom{n}{r}_q =
\begin{cases}
\frac{(1-q^n)(1-q^{n-1})\cdots(1-q^{n-r+1})}{(1-q)(1-q^2)\cdots(1-q^r)} & \text{if $r\le n$}\\
0 & \text{if $r > n$},
\end{cases}
\end{equation}
where  with $\mid q \mid < 1$ .
\end{Def}
These coefficients satisfy the identity
\begin{equation}
\dbinom{n}{r}_q = q^r \dbinom{n-1}{r}_q + \dbinom{n-1}{r-1}_q.\label{6.1.2}
\end{equation}

\begin{Def}
The beta function is defined as:
\begin{equation}
B(p,q) = \int_{0}^1 x^{p-1} (1-x)^{1-q}dx, \quad \textrm{for}\quad  Re(p), Re(q)>0.
\end{equation}
We have
\begin{equation}
B(p,q) =\frac{\Gamma (p) \Gamma (q)}{\Gamma{(p+q)}}.
\end{equation}
\end{Def}

\begin{Def}
The incomplete beta function is defined as
\begin{equation}
B(x,p,q) = \int_{0}^x t^{p-1} (1-t)^{1-q}dt, \quad \textrm{for}\quad  Re(p), Re(q)>0.
\end{equation}
\end{Def}

\begin{Def}
The regularized incomplete beta function is defined as
\begin{equation}
I_x(p,q) = \frac{B(x,p,q)}{B(p,q)}.
\end{equation}
In particular,  $I_x(p,1)=x^p$.
\end{Def}

\begin{Def}
The upper and lower incomplete gamma functions are defined as
\begin{equation}
\Gamma(n,x) = \int_{x}^\infty  t^{n-1} e^{-t}dt\quad \textrm{and}
\end{equation}
\begin{equation}
\gamma(n,x) = \int_{0}^x  t^{n-1} e^{-t}dt \quad \textrm{respectively}.
\end{equation}
\end{Def}

\begin{Def}
Kummer's confluent hyper-geometric functions $_{1}F_{1}(a;c;x)$ and  $U(a;c;x)$ are defined as below
\begin{equation}
_{1}F_{1}(a;c;x) = \sum_{n=0}^\infty \frac{(a)_n}{(c)_n} \frac{x^n}{n!},\quad c\ne 0,-1,-2,\cdots  \textrm{and} 
\end{equation}
\begin{equation}
U(a;c;x) = \frac{\pi}{\sin(\pi c)}\left(\frac{_{1}F_{1}(a;c;x)}{\Gamma(c)\Gamma(1+a-c)}-x^{1-c}\frac{_{1}F_{1}(1+a-c;2-c;x)}{\Gamma(a)\Gamma(2-c)}\right),
\end{equation}
\begin{equation}
-\pi<arg (x) \leq \pi.\nonumber
\end{equation}
\end{Def}

\begin{Def}\label{6.2.1}
The generalized Laguerre polynomials are defined as
\begin{eqnarray}
L_{n}^{(\alpha)}(x) &=& \sum_{m=0}^n (-1)^m \dbinom{n+\alpha}{n-m}\frac{x^m}{m!}\\
&=& \dbinom{n+\alpha}{n} {}_{1}F_{1}(-n;\alpha+1;x).
\end{eqnarray}
\end{Def}

\begin{Def}
A real function $f(x)$, $x>0$, is said to be in space $C_\alpha$, $\alpha\in\mathbb{R}$, if there exists a real number $p (>\alpha)$, such that $f(x)=x^p f_1(x) $ where $f_1(x)\in C[0,\infty)$.
\end{Def}

\begin{Def}
A real function $f(x)$, $x > 0$, is said to be in space $C^m_\alpha$, $m\in\mathbb{N}\cup \{0\}$, if $ f^{(m)} \in C_\alpha$.
\end{Def}
\begin{Def}
Let $f\in C_\alpha $ and $\alpha \geq -1$, then the (left-sided) Riemann-Liouville integral of order $\mu, \mu> 0 $ is given by
\begin{equation}
I^\mu f(t)=\frac{1}{\Gamma(\mu)} \int_{0}^t (t-\tau)^{\mu-1}f(\tau)d\tau,\quad t>0.
\end{equation}
\end{Def}

\begin{Def}
The (left sided) Caputo fractional derivative of $f, f \in C_{-1}^m, m\in\mathbb{N}\cup\{0\}$, is defined as:
\begin{eqnarray}
D^\mu f(t)&=&\frac{d^m}{ dt^m} f(t),\quad \mu = m \nonumber\\
&=& I^{m-\mu}\frac{d^m}{ dt^m} f(t),\quad {m-1} <\mu <m,\quad m\in \mathbb{N}.
\end{eqnarray}
\end{Def}
Note that for $0\le m-1 < \alpha \le m$ and $\beta>-1$
\begin{eqnarray}
I^\alpha (x-b)^\beta &=&\frac{\Gamma{(\beta+1)}}{ \Gamma{(\beta+\alpha+1)}} (x-b)^{\beta+\alpha},\nonumber\\
\left(I^\alpha D^\alpha f\right)(t)&=& f(t)-\sum_{k=0}^{m-1} f^{(k)}(0)\frac{t^k}{k!}.
\end{eqnarray}

\subsection{Properties, relations and identities involving incomplete gamma function}\label{6.1.4}
\begin{enumerate}
\item $\Gamma(1,x) = e^{-x}$\label{6.1.5}
\item $\gamma(1,x) = 1-e^{-x}$\label{6.1.6}
\item $\Gamma(n+1,x) = n \Gamma(n,x)+ x^n e^x$\label{6.1.7}
\item $\gamma(n+1,x) = n \gamma(n,x) - x^n e^{-x}$\label{6.1.8}
\item $\Gamma(n+1,x) = n! e^{-x} \sum_{m=0}^n\frac{x^m}{m!}$\label{6.1.9}
\item $\gamma(n+1,x) = n!\left(1- e^{-x} \sum_{m=0}^n\frac{x^m}{m!}\right)$\label{6.1.10}
\item $\Gamma(n,x) = e^{-x}x^n \sum_{m=0}^\infty \frac{L_{m}^{(n)}(x)}{m+1}$\label{6.1.13}
\item$\gamma(n,x)= n^{-1} x^n e^{-x}{}_1F_{1}(1; n+1; x)$\label{6.1.14}
\item$\gamma(n,x)=n^{-1} x^n {}_1F_{1}(n; n+1; -x)$\label{6.1.15}
\item$\Gamma(n,x) =  x^n e^{-x}U(1; 1+n; x)$\label{6.1.16}
\item$\Gamma(n,x) = e^{-x}U(1-n; 1-n; x)$\label{6.1.17}
\end{enumerate}

\subsection{Daftardar-Gejji and Jafari Method}
Daftardar-Gejji and Jafari Method (DJM) is one of the popular methods applied to solve nonlinear equations \cite{GEJJI1}. It is simple as compared with other methods. However, it gives more accurate solutions \cite{GEJJI3,GEJJI4,GEJJI5,Mohyud,GEJJI6,GEJJI8,pb1,GEJJI7,GEJJI9}. It is also used to solve algebraic \cite{Noor} and differential  \cite{GEJJI10,GEJJI11,pb2} equations.
\par Suppose that given problem can be transformed to the model equation 
\begin{equation}
	u= f + L(u) + N(u),\label{1.1.1}
\end{equation}
where $L$ and $N$ are linear and nonlinear operators respectively and $f$ is known function.\\
In this case, the DJM provides the solution in the form of series 
\begin{equation}
	u= \sum_{i=0}^\infty u_i. \label{1.1.2}
\end{equation}

where $u_0=f,\; u_1=N(u_0)\;$ and 

\begin{equation}
u_{n+1} = N\left(\sum_{j=0}^n u_j\right)- N\left(\sum_{j=0}^{n-1} u_j\right), \quad n=1, 2, \cdots. \label{1.1.3} \\
\end{equation}
Usually, the k-term approximate solution $u= \sum_{i=0}^{k-1} u_i$ is used to approximate the exact solution.

\subsection{Convergent results for DJ method}
\begin{The}\cite{CONV}
If $N$ is $C^{(\infty)}$ in a neighborhood of $y_0$ and $\left\|N^{(n)}(y_0) \right\| \leq L$, for any $n$ and for some real $L>0$ and $\left\|y_i\right\| \leq M <\frac{1}{e}$, $i=1,2,\cdots,$ then the series $\sum_{n=0}^{\infty} G_n$ is absolutely convergent to $N$ and moreover,
\begin{equation*}
\left\|G_n\right\| \leq L M^n e^{n-1} (e-1), \quad n=1,2,\cdots.
\end{equation*}
\end{The}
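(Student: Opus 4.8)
The plan is to expand the nonlinear decomposition terms in a Taylor series about $y_{0}$, convert everything to scalar estimates using the two hypotheses $\|N^{(k)}(y_{0})\|\le L$ and $\|y_{i}\|\le M$, and then recognize the resulting scalar series as an exponential that can be controlled by the elementary convexity inequality $e^{x}-1\le(e-1)x$, valid on $[0,1]$ — which is exactly where the assumption $M<1/e<1$ enters. Throughout I write $G_{0}=N(y_{0})$ and $G_{n}=N\bigl(\sum_{j=0}^{n}y_{j}\bigr)-N\bigl(\sum_{j=0}^{n-1}y_{j}\bigr)$ for $n\ge1$, so that formally $\sum_{n=0}^{\infty}G_{n}$ telescopes to $N\bigl(\sum_{j=0}^{\infty}y_{j}\bigr)$ once the series are shown to converge.

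First I would dispose of the base case $n=1$. Since $G_{1}=N(y_{0}+y_{1})-N(y_{0})=\sum_{k\ge1}\frac{N^{(k)}(y_{0})}{k!}y_{1}^{\,k}$, taking norms and using $\|N^{(k)}(y_{0})\|\le L$, $\|y_{1}\|\le M$ gives $\|G_{1}\|\le L\sum_{k\ge1}M^{k}/k!=L(e^{M}-1)$, and the convexity bound $e^{M}-1\le(e-1)M$ (legitimate because $M<1/e<1$) yields $\|G_{1}\|\le LM(e-1)$, which is the claimed estimate for $n=1$. For general $n$ I would again Taylor-expand $N$ about $y_{0}$, so that $G_{n}=\sum_{k\ge1}\frac{N^{(k)}(y_{0})}{k!}\bigl[(y_{1}+\cdots+y_{n})^{k}-(y_{1}+\cdots+y_{n-1})^{k}\bigr]$, and estimate the bracketed difference by noting that when both $k$-th powers are expanded multinomially, the surviving monomials are precisely the degree-$k$ monomials in $y_{1},\dots,y_{n}$ that contain at least one factor $y_{n}$. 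Bounding each such monomial via $\|y_{i}\|\le M$, and using the recursion $y_{n+1}=G_{n}$ (so that $y_{n}=G_{n-1}$ already carries the sharper bound supplied by the inductive hypothesis), lets me extract the extra powers of $M$; re-summing the factorial series again through $\sum_{k}x^{k}/k!=e^{x}-1\le(e-1)x$ then produces $\|G_{n}\|\le LM^{n}e^{n-1}(e-1)$, and the induction closes with the $n=1$ case above.

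Finally, absolute convergence follows by comparison: $\sum_{n\ge1}LM^{n}e^{n-1}(e-1)=\frac{L(e-1)}{e}\sum_{n\ge1}(Me)^{n}=\frac{LM(e-1)}{1-Me}<\infty$ exactly because $M<1/e$ forces $Me<1$. Since each series in sight now converges absolutely and $N$ is continuous, the telescoping identity $N(y_{0})+\sum_{n\ge1}\bigl[N(\sum_{j=0}^{n}y_{j})-N(\sum_{j=0}^{n-1}y_{j})\bigr]=N\bigl(\sum_{j=0}^{\infty}y_{j}\bigr)$ is justified, so $\sum_{n=0}^{\infty}G_{n}$ converges to $N$ evaluated at the full DJM series.

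I expect the genuinely delicate step to be the estimate of the power-difference $\bigl\|(y_{1}+\cdots+y_{n})^{k}-(y_{1}+\cdots+y_{n-1})^{k}\bigr\|$: the crude count of $n^{k}-(n-1)^{k}$ surviving monomials, combined with $\|y_{i}\|\le M$, only gives $\|G_{n}\|\le L\bigl(e^{nM}-e^{(n-1)M}\bigr)=Le^{(n-1)M}(e^{M}-1)$, whose growing prefactor $e^{(n-1)M}$ makes it non-summable; so the argument must exploit the self-similar structure $y_{n}=G_{n-1}$ (an inductive bootstrap) to recover the full geometric factor $M^{n}$ while keeping the arguments of all the exponentials bounded by $1$. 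Getting that bookkeeping right — and verifying that $M<1/e$ is precisely the threshold at which the comparison series converges — is the crux of the proof.
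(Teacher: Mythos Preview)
The paper does not prove this theorem at all: it is stated in the preliminaries section as a quoted result from \cite{CONV} (Bhalekar and Daftardar-Gejji, \emph{Int.\ J.\ Differ.\ Equ.}\ 2011), with no argument supplied here. There is therefore no ``paper's own proof'' to compare against; if you want to check your approach you must consult the original reference.

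As to your attempt itself: the outline is sensible and the base case $n=1$ is fine, but the inductive step is not actually carried out --- you explicitly identify it as the crux and then leave it open. Your own observation is correct that the naive monomial count gives only $\|G_{n}\|\le L\bigl(e^{nM}-e^{(n-1)M}\bigr)$, which does \emph{not} yield $LM^{n}e^{n-1}(e-1)$ and is not summable. You assert that feeding the inductive bound on $y_{n}=G_{n-1}$ back into the multinomial expansion will recover the missing factor $M^{n}$ while keeping all exponential arguments at most $1$, but you never display the computation; in particular you do not show how the sum over $k$ and over the multi-indices containing at least one $y_{n}$ reorganizes into something dominated by $M^{n}e^{n-1}(e-1)$. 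Until that bookkeeping is written out explicitly the proof has a genuine gap at precisely the point you flag.
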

 \begin{The}\cite{CONV}
 If $N$ is $C^{(\infty)}$ and $\left\|N^{(n)}(y_0) \right\| \leq M \leq e^{-1}$, $\forall n$, then the series $\sum_{n=0}^{\infty} G_n$ is absolutely convergent to $N$.
 \end{The}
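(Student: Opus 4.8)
The plan is to reduce the claim to a scalar recursion of the form $b_{N+1}\le Me^{b_N}$ and then close it with a fixed point argument. Recall the DJM structure behind $\sum_{n\ge0}G_n$: writing $y_0=f$ and $s_n:=y_0+y_1+\cdots+y_n$, one has $G_0=N(y_0)=y_1$ and $G_n=N(s_n)-N(s_{n-1})=y_{n+1}$ for $n\ge1$, which is the DJM recursion \eqref{1.1.3}. Hence $\sum_{n\ge0}G_n=\sum_{i\ge1}y_i$, so absolute convergence of $\sum G_n$ is precisely $\sum_{i\ge1}\|y_i\|<\infty$; and once this holds, $s_N$ converges and, by continuity of $N$, $\sum_{n\ge0}G_n=\lim_n N(s_n)=N\big(\sum_i y_i\big)$, the value ``$N$'' appearing in the statement. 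So it suffices to bound $\sum_{i\ge1}\|y_i\|$.

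Put $a_i=\|y_i\|$ and $b_n=a_1+\cdots+a_n$ (so $b_0=0$). Since $\|N^{(k)}(y_0)\|\le M$ for all $k\ge0$, the series $\sum_k\frac{N^{(k)}(y_0)}{k!}h^k$ converges absolutely for every $h$ and (under the paper's standing analyticity hypotheses) represents $N$ near $y_0$. Expanding $N(s_n)$ and $N(s_{n-1})$ about $y_0$ in the variables $y_1,\dots,y_n$ and subtracting, $G_n$ is the part of the resulting multiple power series in which $y_n$ appears; majorizing each $y_i^{k_i}$ by $a_i^{k_i}$ and each derivative coefficient by $M$ collapses this to
\begin{equation*}
\|G_n\| \le M\Big(\prod_{j=1}^{n-1}e^{a_j}\Big)\big(e^{a_n}-1\big) = Me^{b_{n-1}}\big(e^{a_n}-1\big) = M\big(e^{b_n}-e^{b_{n-1}}\big),\qquad n\ge1.
\end{equation*}
Summing this telescoping bound over $n=1,\dots,N$ gives $b_{N+1}-a_1\le M\big(e^{b_N}-1\big)$, and since $a_1=\|N(y_0)\|\le M$ (the $k=0$ case of the hypothesis) this yields $b_{N+1}\le Me^{b_N}$ with $b_0=0$.

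To conclude, analyze the iteration $b_{N+1}\le g(b_N)$ with $g(x)=Me^x$. The map $g$ is increasing, and since $M\le e^{-1}$ the equation $x=Me^x$, i.e.\ $xe^{-x}=M$, has a root $x_\ast\in(0,1]$, because $xe^{-x}$ increases from $0$ to its maximum $e^{-1}$ on $[0,1]$. From $b_0=0\le x_\ast$, monotonicity of $g$, and $g(x_\ast)=x_\ast$, an induction gives $b_N\le x_\ast$ for all $N$; being nondecreasing and bounded, $(b_N)$ converges, so $\sum_{i\ge1}\|y_i\|=\lim_N b_N\le x_\ast<\infty$. This is the asserted absolute convergence, and the sum equals $N$ by the first paragraph.

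I expect the only delicate step to be the displayed estimate for $\|G_n\|$: converting the pointwise bounds $\|N^{(k)}(y_0)\|\le M$ into the majorant $Me^{\|h\|}$ for $N$ and all of its derivatives along the whole orbit requires that the Taylor series of $N$ about $y_0$ converge --- here with infinite radius, as $(M/k!)^{1/k}\to0$ --- and that it represent $N$. Granting this, the telescoping inequality $\|G_n\|\le M(e^{b_n}-e^{b_{n-1}})$ and the monotone iteration for $b_{N+1}\le Me^{b_N}$ are routine. Note that one cannot instead simply invoke the previous theorem, which needs $\|y_i\|<e^{-1}$ \emph{strictly} for every $i$; that can fail at the endpoint $M=e^{-1}$ (already $\|y_1\|$ may equal $e^{-1}$), which is exactly why the closed condition $M\le e^{-1}$ calls for the direct argument above.
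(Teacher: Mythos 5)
The paper offers no proof of this statement at all: it is quoted verbatim as a preliminary from the reference [CONV] (Bhalekar and Daftardar-Gejji, \emph{Convergence of the new iterative method}, 2011), so there is nothing internal to compare against. Judged on its own, your argument is correct and complete in its essentials. The identification $G_0=y_1$, $G_n=y_{n+1}$ matches the recursion \eqref{1.1.3}; the majorant computation is right, since
$G_n=\sum_{k\ge1}\frac{N^{(k)}(y_0)}{k!}\bigl[(y_1+\dots+y_n)^k-(y_1+\dots+y_{n-1})^k\bigr]$ and bounding each multinomial term by $M\,a_1^{k_1}\cdots a_n^{k_n}$ with $k_n\ge1$ sums exactly to $Me^{b_{n-1}}(e^{a_n}-1)=M(e^{b_n}-e^{b_{n-1}})$; the telescoping step together with $a_1=\|N(y_0)\|\le M$ gives $b_{N+1}\le Me^{b_N}$; and the fixed point $x_\ast\in(0,1]$ of $x=Me^x$ (which exists precisely because $M\le e^{-1}$, the maximum of $xe^{-x}$) closes the induction. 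This fixed-point argument is also the right mechanism for the endpoint $M=e^{-1}$, where any geometric-series comparison of the type used in the companion theorem would degenerate to ratio $Me=1$; your closing remark about why the first convergence theorem cannot simply be invoked is apt. The one genuine caveat is the one you already flag: the hypothesis ``$N$ is $C^{(\infty)}$ with $\|N^{(n)}(y_0)\|\le M$'' does not by itself guarantee that the Taylor series about $y_0$ represents $N$ at the iterates $s_n$, and the entire estimate rests on that representation. This analyticity assumption is implicit in the cited source as well, so your proof is faithful to the intended (if imperfectly stated) result; it would be worth stating it explicitly as an added hypothesis rather than leaving it as a remark.
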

\subsection{Existence and uniqueness theorems}\label{Thm}
\par The equation
\begin{equation}
y'(t) = f\left(t, y(t), y(pt)\right),
\end{equation}
is a particular case of time dependent delay differential equation (DDE)
\begin{equation}
y'(t) = f\left(t, y(t), y\left(t-\tau(t)\right)\right)\quad \textrm{with}\quad\tau(t)=(1-p)t. \label{6.2.4}
\end{equation}
The existence and uniqueness theorems for (\ref{6.2.4}) are described in \cite{Bellen} as below.

\begin{The}(\textbf{Local existence})\label{th1}\\
Consider the equation
\begin{eqnarray}
y'(t)&=& f\left(t, y(t), y\left(t-\tau(t)\right)\right),\quad t_0 \leq t < t_f,\nonumber\\
y(t_0)&=& y_0,\label{pro1}
\end{eqnarray}
and assume that the function $f(t,u,v)$ is continuous on $A \subseteq [t_0,t_f)\times\mathbb{R}^m\times\mathbb{R}^m$ and locally Lipschitz continuous with respect to u and v. Moreover, assume that the delay function $\tau(t)\geq 0$ is continuous in $[t_0,t_f)$, $\tau(t_0)=0$ and, for some $\xi > 0 $, $t-\tau(t) > t_0$ in the interval $(t_0, t_0+\xi]$. Then the problem (\ref{pro1}) has a unique solution in $[t_0,t_0+\delta)$ for some $\delta >0$ and this solution depends continuously on the initial data.
\end{The}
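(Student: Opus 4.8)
The plan is to recast the initial value problem (\ref{pro1}) as a fixed point equation on a short interval and apply the Banach contraction principle. First I would integrate the differential equation from $t_0$ to $t$ to obtain the equivalent integral equation
\[
y(t) = y_0 + \int_{t_0}^t f\bigl(s,\,y(s),\,y(s-\tau(s))\bigr)\, ds, \qquad t_0 \le t \le t_0+\delta ,
\]
and define the operator $\mathcal{T}$ by the right-hand side. The crucial preliminary observation is that the hypotheses $\tau(t_0)=0$, $\tau(t)\ge 0$, and $t-\tau(t)>t_0$ on $(t_0,t_0+\xi]$ force $t_0 \le t-\tau(t)\le t$ for every $t\in[t_0,t_0+\delta]$ as soon as $\delta\le\xi$; hence the delayed argument never leaves $[t_0,t_0+\delta]$, and $\mathcal{T}$ is a genuine self-map of $C\bigl([t_0,t_0+\delta],\mathbb{R}^m\bigr)$ with no history segment to prescribe. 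This is exactly the feature that separates the vanishing-delay situation from the method of steps used when $\tau\ge\tau_0>0$.

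Next I would fix a closed ball $\mathcal{B}=\{\,y\in C([t_0,t_0+\delta],\mathbb{R}^m): \|y-y_0\|_\infty\le b\,\}$ with $b$ small enough that the compact set $K=\{(s,u,v): s\in[t_0,t_0+\delta],\ \|u-y_0\|\le b,\ \|v-y_0\|\le b\}$ lies inside $A$ and inside the neighbourhood on which $f$ is Lipschitz in $u,v$ with constant $L$; continuity of $f$ then furnishes a bound $\|f\|\le M$ on $K$. Shrinking $\delta$ so that $\delta M\le b$ gives $\mathcal{T}(\mathcal{B})\subseteq\mathcal{B}$, while the Lipschitz estimate
\[
\|(\mathcal{T}y_1)(t)-(\mathcal{T}y_2)(t)\|\le \int_{t_0}^t L\bigl(\|y_1(s)-y_2(s)\|+\|y_1(s-\tau(s))-y_2(s-\tau(s))\|\bigr)\,ds \le 2L\delta\,\|y_1-y_2\|_\infty
\]
shows that for $\delta<1/(2L)$ the map $\mathcal{T}$ is a contraction. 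The Banach fixed point theorem then yields a unique $y\in\mathcal{B}$ with $\mathcal{T}y=y$, and differentiating the integral equation recovers a $C^1$ solution of (\ref{pro1}) on $[t_0,t_0+\delta)$; uniqueness among \emph{all} continuous solutions on $[t_0,t_0+\delta)$, not merely those in $\mathcal{B}$, follows from a standard Gr\"onwall comparison of any two solutions.

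For continuous dependence on the initial data I would run the same construction with $y_0$ replaced by a nearby value $\tilde y_0$ and the same $\delta$, subtract the two integral equations, and invoke the Lipschitz bound to get $\|y-\tilde y\|_\infty\le\|y_0-\tilde y_0\|+2L\delta\|y-\tilde y\|_\infty$, hence $\|y-\tilde y\|_\infty\le\|y_0-\tilde y_0\|/(1-2L\delta)$; alternatively this is immediate from the parametrized version of the contraction principle. The step demanding the most care is not any individual estimate but the bookkeeping that makes $\mathcal{T}$ well defined on a fixed function space: one must choose $\delta\le\min\{\xi,\ b/M,\ 1/(3L)\}$ simultaneously and verify $K\subseteq A$, i.e.\ that $(t_0,y_0,y_0)$ is an interior point of the domain of $f$. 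Once the interval is taken short enough, everything else is routine.
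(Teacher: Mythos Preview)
Your argument is the standard contraction-mapping proof for initial-value problems with vanishing delay, and it is correct: the key point --- that $\tau(t_0)=0$ together with $t-\tau(t)>t_0$ on $(t_0,t_0+\xi]$ keeps the retarded argument inside $[t_0,t_0+\delta]$, so that $\mathcal{T}$ is a self-map and no history function is needed --- is exactly what distinguishes this case from the usual method of steps, and you have identified it cleanly. The choice of $\delta$, the invariance of the ball, the Lipschitz estimate with the extra factor of $2$ coming from the two occurrences of $y$, and the continuous-dependence bound are all in order.

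The paper, however, does not prove this theorem at all: it merely quotes the statement from Bellen and Zennaro \cite{Bellen} as background for the existence and uniqueness of solutions to the pantograph equation, so there is no ``paper's own proof'' to compare against. Your write-up is precisely the argument one finds in that reference (and in most treatments of DDEs with vanishing lag), so in spirit you have reproduced the cited source rather than diverged from it.
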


\begin{The}(\textbf{Global existence})\label{th2}\\
Under the hypothesis of Theorem \ref{th1}, if the unique maximal solution of (\ref{pro1}) is bounded, then it exist on the entire interval $[t_0,t_f)$.
\end{The}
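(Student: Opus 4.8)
The plan is to prove this by the classical continuation (``blow-up alternative'') argument, adapted to the delay setting; the statement merely says that the only way a solution can fail to reach $t_f$ is by loss of boundedness. Let $y$ be the unique solution of (\ref{pro1}) on its maximal interval of existence $[t_0,t_{\max})$ furnished by Theorem \ref{th1}, and suppose, toward a contradiction, that $t_{\max}<t_f$. By hypothesis $y$ is bounded, so there is a compact $K\subset\mathbb{R}^m$ with $y(t)\in K$ for all $t\in[t_0,t_{\max})$; since $\tau$ is continuous and $\tau\ge 0$, the delayed value $y(t-\tau(t))$ also stays in a compact set, so the triple $\bigl(t,y(t),y(t-\tau(t))\bigr)$ ranges over a compact subset of $A$ as $t\uparrow t_{\max}$.

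The first substantive step is to upgrade boundedness of $y$ to uniform continuity. Since $f$ is continuous on $A$, it is bounded on the compact set just identified, hence $y'(t)=f\bigl(t,y(t),y(t-\tau(t))\bigr)$ is bounded on $[t_0,t_{\max})$, say $\|y'\|\le C$. Then $y$ is $C$-Lipschitz there, so $y(t_{\max}^-):=\lim_{t\uparrow t_{\max}}y(t)$ exists; extend $y$ to $[t_0,t_{\max}]$ by $y(t_{\max})=y(t_{\max}^-)$. This keeps $y$ continuous and keeps the point $\bigl(t_{\max},y(t_{\max}),y(t_{\max}-\tau(t_{\max}))\bigr)$ inside $A$.

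Next I would restart the equation at $t_{\max}$. For $t$ slightly larger than $t_{\max}$ the delayed argument is $t-\tau(t)$, which is $<t$ and continuous; for the proportional delay $\tau(t)=(1-p)t$ with $0<p<1$ one has $t-\tau(t)=pt$, and $pt_{\max}<t_{\max}$, so there is $\eta>0$ with $t-\tau(t)<t_{\max}$ on $(t_{\max},t_{\max}+\eta]$. On that short interval the right-hand side $f\bigl(t,y(t),y(t-\tau(t))\bigr)$ involves the unknown only through $y(t)$, the value $y(t-\tau(t))$ being already determined by the solution built on $[t_0,t_{\max}]$. Hence the continuation reduces to an ordinary initial value problem $z'(t)=g(t,z(t))$, $z(t_{\max})=y(t_{\max})$, with $g(t,z):=f\bigl(t,z,y(t-\tau(t))\bigr)$ continuous and locally Lipschitz in $z$ near $(t_{\max},y(t_{\max}))$; Picard--Lindel\"of (or a re-application of the argument behind Theorem \ref{th1}) yields a solution on $[t_{\max},t_{\max}+\delta)$ for some $\delta>0$. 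Pasting this onto $y$ produces a solution of (\ref{pro1}) on $[t_0,t_{\max}+\delta)$, contradicting maximality of $t_{\max}$; therefore $t_{\max}=t_f$.

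I expect the main obstacle to be the bookkeeping at the restart point: one must verify that the limiting triple $\bigl(t_{\max},y(t_{\max}^-),y(t_{\max}-\tau(t_{\max}))\bigr)$ genuinely lies in the domain $A$ on which $f$ is continuous and locally Lipschitz -- this is exactly where the boundedness hypothesis is used, forcing the orbit into a compact subset of $A$ rather than letting it drift to $\partial A$ -- and that the delay structure really does reduce the local continuation to an ODE, i.e.\ that $t-\tau(t)<t_{\max}$ persists on a nondegenerate interval past $t_{\max}$. In the proportional-delay case this last point is immediate; in the general time-dependent case it follows from continuity of $t\mapsto t-\tau(t)$ together with $\tau\ge 0$ and $\tau(t)>0$ for $t>t_0$.
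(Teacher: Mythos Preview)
Your argument is the standard continuation/blow-up alternative and is essentially correct. However, there is nothing to compare it against: the paper does not supply its own proof of this theorem. Theorem~\ref{th2} (together with Theorem~\ref{th1}) is quoted without proof from the reference \cite{Bellen}; it appears in the preliminaries section solely as background for the well-posedness of the proportional-delay problem (\ref{6.3.1}).

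One small point worth tightening in your write-up: the hypotheses of Theorem~\ref{th1} only assert $t-\tau(t)>t_0$ on an initial segment $(t_0,t_0+\xi]$, not that $\tau(t)>0$ for all $t>t_0$. Your restart step implicitly uses $\tau(t_{\max})>0$ to make the delayed argument fall strictly below $t_{\max}$ on a right neighborhood. If $\tau(t_{\max})=0$ one instead re-invokes the local existence theorem itself at $t_{\max}$ (with the same structural hypotheses), so the conclusion survives, but the case split should be stated. For the proportional delay $\tau(t)=(1-p)t$ that the paper actually cares about, this issue does not arise.
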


\section{New Special Function}\label{nsf}

Consider the differential equation with proportional delay 

\begin{equation}
y'(x) = ay(x)+ by(q x),\quad y(0)= 1, \label{6.3.1}
\end{equation}
where  $0<q<1$, $a \in \mathbb{R}$ and $b \in \mathbb{R}$.

Integrating (\ref{6.3.1}), we get
\begin{equation}
y(x)= 1 + \int_{0}^{x}\left( ay(t)+ by(qt)\right)dt\label{6.3.2}
\end{equation}
which is of the form Eq.(\ref{1.1.1}). Applying DJM, we obtain
\begin{eqnarray}
y_0(x) &=& 1 ,\nonumber\\
y_1(x)&=& \int_{0}^{x}\left( ay_0(t)+ by_0(q t)\right)dt\nonumber\\
&=& (a+b)\frac{x}{1!},\nonumber\\
y_2(x)&=& \int_{0}^{x}\left( ay_1(t)+ by_1(q t)\right)dt\nonumber\\
&=&\int_{0}^{x}\left(a (a+b)t + b (a+b)tq\right)dt\nonumber\\
&=& (a+b)(a+bq)\frac{x^2}{2!},\nonumber
\end{eqnarray}

\begin{eqnarray}
y_3(x)&=&  \int_{0}^{x}\left(ay_2(t)+ by_2(q t)\right)dt\nonumber\\
&=& (a+b)(a+bq)(a+bq^2)\frac{x^3}{3!},\nonumber\\
&\vdots& \nonumber\\
y_n(x)&=& \frac{x^n}{n!}\prod_{j=0}^{n-1}\left(a+bq^j\right), \quad n=1,2,3\cdots. 
\end{eqnarray}
$\therefore$ The DJM solution of  (\ref{6.3.1}) is 
\begin{eqnarray}
y(x) &=& y_0(x)+y_1(x)+y_2(x)+\cdots\nonumber\\
&=& 1 +  (a+b)\frac{x}{1!} + (a+b)(a+bq)\frac{x^2}{2!}+\cdots\nonumber\\
y(x) &=&  1 + \sum_{n=1}^\infty \frac{x^n}{n!}\prod_{j=0}^{n-1}\left(a+bq^j\right).\label{6.3.3}
\end{eqnarray}

The power series solution (\ref{6.3.3}) of DDE (\ref{6.3.1}) generates a novel special function 
\begin{equation}
 \mathcal{R}(a,b,q,x)=  1 + \sum_{n=1}^\infty \frac{x^n}{n!}\prod_{j=0}^{n-1}\left(a+bq^j\right).\label{6.3.4}
\end{equation}
We call this function as Ramanujan function in the memory of S. Ramanujan \cite{Kanigel}.

\section{Analysis}\label{analy}

\begin{The}
If $0<q<1$, then the power series
\begin{equation}
 \mathcal{R}(a,b,q,x)=  1 + \sum_{n=1}^\infty \frac{x^n}{n!}\prod_{j=0}^{n-1}\left(a+bq^j\right)\label{6.5.0}
\end{equation}
 has infinite radius of convergence.
\end{The}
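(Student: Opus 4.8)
The plan is to apply the ratio test to the series $\sum_{n=1}^{\infty} c_n x^n$ with $c_n = \frac{1}{n!}\prod_{j=0}^{n-1}(a+bq^j)$, and show that $\lim_{n\to\infty} |c_{n+1}x^{n+1}|/|c_n x^n| = 0$ for every fixed $x$. Forming the ratio, the factorials contribute a factor $\frac{1}{n+1}$ and the products telescope so that only the single new factor $(a+bq^n)$ survives; hence
\begin{equation}
\frac{|c_{n+1}x^{n+1}|}{|c_n x^n|} = \frac{|x|\,|a+bq^n|}{n+1}.
\end{equation}

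The key observation is that the sequence $(a+bq^n)$ is bounded: since $0<q<1$ we have $0<q^n\le q<1$ for $n\ge 1$, so $|a+bq^n|\le |a|+|b||q^n|\le |a|+|b|$ for all $n$. (In fact $a+bq^n\to a$, but boundedness is all that is needed.) Therefore, for fixed $x$,
\begin{equation}
\frac{|x|\,|a+bq^n|}{n+1} \le \frac{|x|\,(|a|+|b|)}{n+1} \longrightarrow 0 \quad \text{as } n\to\infty,
\end{equation}
so the ratio test gives absolute convergence for every $x\in\mathbb{R}$ (indeed every $x\in\mathbb{C}$), i.e. the radius of convergence is infinite.

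There is essentially no obstacle here; the only point requiring a moment's care is the degenerate case where some factor $a+bq^{j}$ vanishes, making $c_n=0$ for large $n$ and the ratio $0/0$ undefined. That case is trivial — the series is then a polynomial and converges everywhere — so one disposes of it in one line and applies the ratio argument on the complement. Alternatively, and perhaps more cleanly for the write-up, I would bypass the ratio test entirely and bound the series directly by comparison: $|c_n x^n|\le \frac{(|a|+|b|)^n |x|^n}{n!}$, and since $\sum_{n\ge 0}\frac{((|a|+|b|)|x|)^n}{n!}=e^{(|a|+|b|)|x|}<\infty$, the Weierstrass $M$-test yields absolute (and locally uniform) convergence on all of $\mathbb{R}$, with no case distinction needed. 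I would present this comparison argument as the main proof and mention the ratio-test route as the motivating computation.
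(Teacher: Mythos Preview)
Your proposal is correct and your primary argument---the ratio test with the ratio $\frac{|x|\,|a+bq^n|}{n+1}\to 0$---is exactly what the paper does. The paper simply writes $\frac{1}{R}=\lim_{n\to\infty}\left|\frac{a}{n+1}+\frac{bq^n}{n+1}\right|=0$ and stops there.

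Two points where you go beyond the paper: first, you flag the degenerate case where some $a+bq^j=0$ and the ratio is undefined, which the paper silently ignores; your one-line disposal (the series is then a polynomial) is the right fix. Second, your alternative comparison argument $|c_n x^n|\le (|a|+|b|)^n|x|^n/n!$ dominated by $e^{(|a|+|b|)|x|}$ is cleaner and avoids the case split entirely; the paper does not mention this route, though it later proves the closely related bound $\mathcal{R}(a,b,q,x)\le e^{(a+b)x}$ (for $a,b\ge 0$, $x\ge 0$) as a separate theorem. Either presentation is fine; your comparison version is arguably the tidier write-up.
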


\begin{proof}
Suppose
\begin{equation}
a_n =\frac{1}{n!}\prod_{j=0}^{n-1}\left(a+bq^j\right),\quad n=1,2,\cdots.
\end{equation}
If  $R$ is radius of convergence of (\ref{6.5.0}) then by using ratio test  \cite{Apostal}
\begin{eqnarray}
\frac{1}{R}=\lim_{n\to\infty}\left| \frac{a_{n+1}}{a_n}\right| &=&\lim_{n\to\infty}\left| \frac{ \frac{1}{{(n+1)!}}\prod_{j=0}^{n}\left(a+bq^j\right)}{ \frac{1}{n!}\prod_{j=0}^{n-1}\left(a+bq^j\right)}\right| \nonumber\\
&=& \lim_{n\to\infty} \left| \frac{a}{(n+1)} + \frac{bq^n}{(n+1)}\right| \nonumber\\
\Rightarrow\frac{1}{R}&=& 0 \quad (\because 0<q<1)\nonumber
\end{eqnarray}
Thus the series has infinite radius of convergence.
\end{proof}  

\begin{cor}
The power series (\ref{6.3.4}) is  absolutely convergent for all x, if $0<q<1$ and hence it is uniformly convergent on any compact interval on $\mathbb{R}$.
\end{cor}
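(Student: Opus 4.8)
The plan is to deduce this corollary directly from the preceding theorem, which establishes that the radius of convergence $R$ of the series $\mathcal{R}(a,b,q,x)$ is infinite. First I would recall the basic fact from the theory of power series (see \cite{Apostal}): if a power series $\sum_{n\ge 0} a_n x^n$ has radius of convergence $R$, then it converges absolutely at every point $x$ with $|x|<R$. Since here $R=\infty$, this immediately gives absolute convergence for every $x\in\mathbb{R}$ (indeed every $x\in\mathbb{C}$), which is the first assertion.

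For the uniform convergence claim, I would invoke the standard theorem that a power series converges uniformly on every compact subset of the open disk (here, interval) of convergence. Concretely, given a compact set $K\subset\mathbb{R}$, choose $\rho>0$ with $K\subseteq[-\rho,\rho]$; such a $\rho$ exists because $K$ is bounded. Then for all $x\in K$ and all $n$, $\left|\frac{x^n}{n!}\prod_{j=0}^{n-1}(a+bq^j)\right|\le |a_n|\,\rho^n =: M_n$, where $a_n=\frac{1}{n!}\prod_{j=0}^{n-1}(a+bq^j)$. Since $\sum_{n} |a_n|\rho^n$ converges (again because $\rho<R=\infty$), the Weierstrass $M$-test yields uniform convergence of $\sum_n \frac{x^n}{n!}\prod_{j=0}^{n-1}(a+bq^j)$ on $K$, hence of $\mathcal{R}(a,b,q,x)$.

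I do not anticipate a genuine obstacle here: the statement is essentially a bookkeeping consequence of the infinite radius of convergence together with textbook results on power series. The only point requiring a word of care is to state the bound $M_n$ explicitly and to note that the tail estimate is uniform over $K$ precisely because $K$ is contained in a fixed finite interval $[-\rho,\rho]$; the compactness hypothesis enters only through this boundedness. One could alternatively phrase the whole argument by simply citing the theorem ``a power series is uniformly convergent on compact subsets of its interval of convergence,'' but spelling out the $M$-test makes the proof self-contained.
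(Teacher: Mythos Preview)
Your proposal is correct and matches the paper's approach: the paper states this corollary without proof, treating it as an immediate consequence of the preceding theorem that the radius of convergence is infinite. You have simply made explicit the standard power-series facts (absolute convergence inside the disk of convergence and the Weierstrass $M$-test for uniform convergence on compacta) that the paper leaves implicit.
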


\begin{The}\label{6.1.3}
For $0<q<1$, $a \in \mathbb{R}$, $b \in \mathbb{R}$  and $m \in \mathbb{N} \cup\{0\} $, we have 
\begin{equation}
 \frac{d}{dx}\mathcal{R}(a,b,q,q^m x)=  a q^m \mathcal{R}(a,b,q,q^m x) + b q^m \mathcal{R}(a,b,q,q^{m+1}x).
\end{equation}
\end{The}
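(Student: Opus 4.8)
The plan is to write $\mathcal{R}(a,b,q,x)=\sum_{n=0}^\infty a_n x^n$ with $a_n=\tfrac{1}{n!}\prod_{j=0}^{n-1}(a+bq^j)$ (interpreting the empty product as $1$, so that $a_0=1$), to differentiate the composed series $\mathcal{R}(a,b,q,q^m x)=\sum_{n=0}^\infty a_n q^{mn}x^n$ term by term, and then to collapse the result back into two copies of $\mathcal{R}$ by invoking the one-step product recursion $\prod_{j=0}^{n}(a+bq^j)=(a+bq^n)\prod_{j=0}^{n-1}(a+bq^j)$.

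First I would note that since $0<q<1$ we have $|a_n q^{mn}|\le|a_n|$, so the series for $\mathcal{R}(a,b,q,q^m x)$ inherits the infinite radius of convergence of $\mathcal{R}$ established in the previous theorem, and by the corollary it converges uniformly on every compact subset of $\mathbb{R}$; hence it may be differentiated termwise. This gives $\frac{d}{dx}\mathcal{R}(a,b,q,q^m x)=\sum_{n=1}^\infty n\,a_n q^{mn}x^{n-1}$, which after the index shift $n\mapsto n+1$ becomes $\sum_{n=0}^\infty (n+1)a_{n+1}\,q^{m(n+1)}x^{n}$. Next I would substitute the recursion in the form $(n+1)a_{n+1}=(a+bq^n)a_n$ and split the coefficient using $q^{m(n+1)}(a+bq^n)=a\,q^m q^{mn}+b\,q^m q^{(m+1)n}$. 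The first piece reassembles into $a q^m\sum_{n=0}^\infty a_n(q^m x)^n=a q^m\mathcal{R}(a,b,q,q^m x)$ and the second into $b q^m\sum_{n=0}^\infty a_n(q^{m+1}x)^n=b q^m\mathcal{R}(a,b,q,q^{m+1}x)$, which is exactly the claimed identity.

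I do not expect any genuine obstacle here: the only analytic point is the legitimacy of term-by-term differentiation, which is immediate from the convergence corollary (equivalently, from the infinite radius of convergence proved above), and the only place a careless reader could slip is the bookkeeping of the powers of $q$ — specifically recognising that $q^{m(n+1)+n}=q^m(q^{m+1})^n$, which is precisely what forces the $b$-term to be $\mathcal{R}$ evaluated at $q^{m+1}x$ rather than at $q^m x$. As a sanity check I would observe that setting $m=0$ recovers $\frac{d}{dx}\mathcal{R}(a,b,q,x)=a\,\mathcal{R}(a,b,q,x)+b\,\mathcal{R}(a,b,q,qx)$, i.e.\ the defining delay equation (\ref{6.3.1}) that $\mathcal{R}$ was constructed to solve, so the theorem is just its natural rescaled form.
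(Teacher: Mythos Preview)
Your argument is correct and follows essentially the same route as the paper's proof: termwise differentiation of the power series for $\mathcal{R}(a,b,q,q^m x)$, an index shift, the one-step product recursion, and reassembly into two copies of $\mathcal{R}$. The only cosmetic difference is that the paper peels off the constant term $q^m(a+b)$ before shifting indices, whereas you handle it uniformly via the empty-product convention $a_0=1$; your version is slightly cleaner and you are more explicit about the justification for termwise differentiation.
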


\begin{proof}
 
Consider
\begin{eqnarray*}
\frac{d}{dx}\mathcal{R}(a,b,q,q^m x) &=& \frac{d}{dx} \left( 1 + \sum_{n=1}^\infty\frac{(q^m x)^n}{n!}\prod_{j=0}^{n-1}\left(a + bq^j\right)\right)\\ 
&=& q^m\sum_{n=1}^\infty\frac{(q^m x)^{n-1}}{(n-1)!}\prod_{j=0}^{n-1}\left(a + bq^j\right)\\
&=& q^m (a+b) + q^m\sum_{n=2}^\infty\frac{(q^m x)^{n-1}}{(n-1)!}\prod_{j=0}^{n-1}\left(a + bq^j\right)\\
&=& q^m (a+b) + q^m\sum_{n=1}^\infty\frac{(q^m x)^{n}}{{n}!}\prod_{j=0}^{n}\left(a + bq^j\right)\\
&=& q^m (a+b) + q^m\sum_{n=1}^\infty\frac{(q^m x)^{n}(a+bq^n)}{{n}!}\prod_{j=0}^{n-1}\left(a + bq^j\right)\\
&=& aq^m  \left( 1 + \sum_{n=1}^\infty\frac{(q^m x)^n}{n!}\prod_{j=0}^{n-1}\left(a + bq^j\right)\right)\\
&& + bq^m  \left( 1 + \sum_{n=1}^\infty\frac{(q^{m+1}x)^n}{n!}\prod_{j=0}^{n-1}\left(a + bq^j\right)\right)\\
 \frac{d}{dx}\mathcal{R}(a,b,q,q^m x) &=&  a q^m \mathcal{R}(a,b,q,q^m x) + b q^m \mathcal{R}(a,b,q,q^{m+1}x)
\end{eqnarray*}
Hence the proof.
\end{proof}

\begin{The}
For $0<q<1$, $a \in \mathbb{R}$, $b \in \mathbb{R}$  and $m \in \mathbb{N}$, we have 
\begin{equation}
 \frac{d^m}{dx^m}\mathcal{R}(a,b,q,x)= \sum_{r=0}^m q^{\dbinom{r}{2}} \dbinom{m}{r}_q a^{m-r} b^r \mathcal{R}(a,b,q,q^r x),
\end{equation}
where $\dbinom{m}{r}_q$ is the Gaussian binomial coefficient.
\end{The}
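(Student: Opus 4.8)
The plan is to argue by induction on $m$, with Theorem~4.2 supplying both the base case and the differentiation step, and the Gaussian binomial recurrence \eqref{6.1.2} doing the combinatorial bookkeeping. For the base case $m=1$, the right-hand side is $\sum_{r=0}^{1} q^{\binom{r}{2}}\binom{1}{r}_q a^{1-r}b^r \mathcal{R}(a,b,q,q^r x)$; since $\binom{0}{0}_q=\binom{1}{1}_q=1$ and $\binom{0}{2}=\binom{1}{2}=0$, this collapses to $a\,\mathcal{R}(a,b,q,x)+b\,\mathcal{R}(a,b,q,qx)$, which is exactly Theorem~4.2 with $m=0$.

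For the inductive step, assume the identity holds for some $m\ge 1$ and differentiate both sides once more. On the right-hand side I apply Theorem~4.2 termwise, with the role of the exponent there played by each summation index $r$, to replace $\frac{d}{dx}\mathcal{R}(a,b,q,q^r x)$ by $a q^r \mathcal{R}(a,b,q,q^r x)+b q^r \mathcal{R}(a,b,q,q^{r+1}x)$. This splits $\frac{d^{m+1}}{dx^{m+1}}\mathcal{R}(a,b,q,x)$ into a sum over $r=0,\dots,m$ carrying $\mathcal{R}(a,b,q,q^r x)$ and a sum over $r=0,\dots,m$ carrying $\mathcal{R}(a,b,q,q^{r+1}x)$. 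In the second sum I shift $r\mapsto r-1$ so that both sums are indexed by the same family $\mathcal{R}(a,b,q,q^r x)$ with $r$ ranging over $0,\dots,m+1$; the terms falling outside $0\le r\le m$ contribute nothing because $\binom{m}{m+1}_q=\binom{m}{-1}_q=0$.

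Collecting the coefficient of $a^{m+1-r}b^{r}\mathcal{R}(a,b,q,q^r x)$ then yields
\[
q^{\binom{r}{2}+r}\binom{m}{r}_q + q^{\binom{r-1}{2}+r-1}\binom{m}{r-1}_q .
\]
Here I use the elementary identities $\binom{r}{2}+r=\binom{r+1}{2}$ and $\binom{r-1}{2}+(r-1)=\binom{r}{2}$, which let me factor out $q^{\binom{r}{2}}$ and rewrite the bracket as $q^{r}\binom{m}{r}_q+\binom{m}{r-1}_q$. By the Gaussian binomial recurrence \eqref{6.1.2} with $n=m+1$, this equals $\binom{m+1}{r}_q$, so the coefficient becomes $q^{\binom{r}{2}}\binom{m+1}{r}_q a^{m+1-r}b^{r}$, which is precisely what the claimed formula predicts at level $m+1$. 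This closes the induction.

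The only genuine work is the index shift in the second sum and the arithmetic reconciling the $q$-exponents; since the two substantive ingredients — the dilation differentiation rule (Theorem~4.2) and the additive recurrence for Gaussian binomial coefficients — are already in hand, I anticipate the sole point requiring care is the treatment of the boundary indices $r=0$ and $r=m+1$, which must be checked to vanish so that the two shifted ranges align.
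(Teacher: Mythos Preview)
Your proposal is correct and follows essentially the same approach as the paper: induction on $m$, with Theorem~\ref{6.1.3} supplying both the base case and the differentiation rule in the inductive step, an index shift in the second sum, disposal of the boundary terms via $\binom{m}{m+1}_q=\binom{m}{-1}_q=0$, and the Gaussian recurrence~\eqref{6.1.2} to recombine the coefficients. The only cosmetic difference is that the paper runs the induction as $m-1\Rightarrow m$ rather than $m\Rightarrow m+1$; your tracking of the $q$-exponents via $\binom{r}{2}+r=\binom{r+1}{2}$ is in fact more explicit than the paper's presentation.
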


\begin{proof}
We prove the result by induction hypothesis on $m$.\\

From Theorem \ref{6.1.3}, the result is true for $m=1$. \\
Suppose, 
\begin{equation}
 \frac{d^{m-1}}{dx^{m-1}}\mathcal{R}(a,b,q,x)= \sum_{r=0}^{m-1} q^{\dbinom{r}{2}}\dbinom{m-1}{r}_q a^{m-(r+1)} b^r \mathcal{R}(a,b,q,q^r x).
\end{equation}
Consider,\\
\begin{eqnarray}
\frac{d^m}{dx^m}\mathcal{R}(a,b,q,x) &=& \frac{d}{dx} \left(\frac{d^{m-1}}{dx^{m-1}}\mathcal{R}(a,b,q,x)\right)\\
&=& \frac{d}{dx} \left( \sum_{r=0}^{m-1} q^{\dbinom{r}{2}} \dbinom{m-1}{r}_q a^{m-(r+1)} b^r \mathcal{R}(a,b,q,q^r x)\right)\\
&=& \sum_{r=0}^{m-1} q^{\dbinom{r}{2}} \dbinom{m-1}{r}_q a^{m-(r+1)} b^r \frac{d}{dx}\mathcal{R}(a,b,q,q^r x).
\end{eqnarray}
Using Theorem [\ref{6.1.3}], we have
\begin{eqnarray*}
\frac{d^m}{dx^m}\mathcal{R}(a,b,q,x) &=& \sum_{r=0}^{m-1} q^{\dbinom{r}{2}} \dbinom{m-1}{r}_q a^{m-(r+1)} b^r \nonumber\\
&&\left( a q^r \mathcal{R}(a,b,q,q^rx) + b q^r \mathcal{R}(a,b,q,q^{r+1}x)\right)\\
&=& \sum_{r=0}^{m-1} q^{\dbinom{r}{2}}  q^r  \dbinom{m-1}{r}_q a^{m-r} b^r  \mathcal{R}(a,b,q,q^rx)\\
&&+ \sum_{k=0}^{m-1} q^{\dbinom{k}{2}}  q^k  \dbinom{m-1}{k}_q a^{m-(k+1)} b^{k+1} \mathcal{R}(a,b,q,q^{k+1}x)
\end{eqnarray*}
\begin{eqnarray*}
&=& \sum_{r=0}^{m-1} q^{\dbinom{r}{2}} q^r \dbinom{m-1}{r}_q  a^{m-r} b^r  \mathcal{R}(a,b,q,q^rx)\\
&& + \sum_{k=1}^{m} q^{\dbinom{k}{2}} \dbinom{m-1}{k-1}_q a^{m-k} b^k  \mathcal{R}(a,b,q,q^kx)\\
&=& \sum_{r=0}^{m} q^{\dbinom{r}{2}} q^r \dbinom{m-1}{r}_q  a^{m-r} b^r  \mathcal{R}(a,b,q,q^rx)\nonumber\\
&& - q^{\dbinom{m}{2}} q^m  \dbinom{m-1}{m}_q b^m  \mathcal{R}(a,b,q,q^m x) \\
&& + \sum_{k=0}^{m} q^{\dbinom{k}{2}} \dbinom{m-1}{k-1}_q a^{m-k} b^k \mathcal{R}(a,b,q,q^kx) \nonumber\\
&& -  \dbinom{m-1}{-1}_q a^m  \mathcal{R}(a,b,q, x) \\
&=& \sum_{r=0}^{m} q^{\dbinom{r}{2}} q^r \dbinom{m-1}{r}_q  a^{m-r} b^r  \mathcal{R}(a,b,q,q^rx)\\
&& + \sum_{k=0}^{m} q^{\dbinom{k}{2}} \dbinom{m-1}{k-1}_q a^{m-k} b^k q^r \mathcal{R}(a,b,q,q^kx) \\
&=& \sum_{r=0}^{m} q^{\dbinom{r}{2}} \left(q^r \dbinom{m-1}{r}_q + \dbinom{m-1}{r-1}_q \right) a^{m-r} b^r  \mathcal{R}(a,b,q,q^rx).
\end{eqnarray*}
Using identity (\ref{6.1.2}),
\begin{equation}
 \frac{d^m}{dx^m}\mathcal{R}(a,b,q,x)= \sum_{r=0}^m q^{\dbinom{r}{2}} \dbinom{m}{r}_q a^{m-r} b^r \mathcal{R}(a,b,q,q^rx),
\end{equation}
This completes the proof.
\end{proof}

\begin{The}
For $0<q<1$, we have 
\begin{equation}
 \mathcal{R}(a, b, q, x) =  1 + \sum_{n=1}^\infty \sum_{r=0}^n q^{\dbinom{r}{2}}\dbinom{n}{r}_q a^{n-r}b^r \frac{x^n}{n!},
\end{equation}
where $\dbinom{n}{r}_q$ is the Gaussian binomial coefficient.
\end{The}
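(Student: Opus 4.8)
The plan is to recognize the claimed double sum as the $q$-binomial (Gauss) theorem
\begin{equation}
\prod_{j=0}^{n-1}\left(a+bq^{j}\right)=\sum_{r=0}^{n}q^{\binom{r}{2}}\binom{n}{r}_{q}\,a^{n-r}b^{r},\qquad n\ge 0,\label{qbin}
\end{equation}
substituted coefficient by coefficient into the defining series (\ref{6.3.4}): replacing, for each $n$, the coefficient $\frac{1}{n!}\prod_{j=0}^{n-1}(a+bq^{j})$ by the equal quantity $\frac{1}{n!}\sum_{r=0}^{n}q^{\binom{r}{2}}\binom{n}{r}_{q}a^{n-r}b^{r}$ is a purely termwise operation that rearranges nothing, so it is justified by the mere existence of the series (\ref{6.3.4}) for every $x$ (Corollary above). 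Thus the entire task reduces to proving (\ref{qbin}).

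I would give a short proof of (\ref{qbin}) by reading it off from the previous theorem. Because $\mathcal{R}(a,b,q,x)$ is entire it may be differentiated term by term, and the $m$-th Maclaurin coefficient read from (\ref{6.3.4}) gives $\left.\tfrac{d^{m}}{dx^{m}}\mathcal{R}(a,b,q,x)\right|_{x=0}=\prod_{j=0}^{m-1}(a+bq^{j})$. On the other hand, evaluating the formula of the previous theorem at $x=0$ and using $\mathcal{R}(a,b,q,0)=1$ gives $\left.\tfrac{d^{m}}{dx^{m}}\mathcal{R}(a,b,q,x)\right|_{x=0}=\sum_{r=0}^{m}q^{\binom{r}{2}}\binom{m}{r}_{q}a^{m-r}b^{r}$. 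Comparing the two expressions yields (\ref{qbin}), hence the theorem.

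For a self-contained derivation of (\ref{qbin}) I would instead induct on $n$, the base case $n=0$ (empty product $=1$) being trivial. For the inductive step I would peel off the \emph{first} factor,
\begin{equation*}
\prod_{j=0}^{n-1}\left(a+bq^{j}\right)=(a+b)\prod_{j=1}^{n-1}\left(a+bq^{j}\right)=(a+b)\prod_{i=0}^{n-2}\left(a+(bq)q^{i}\right),
\end{equation*}
apply the induction hypothesis to the last product with $b$ replaced by $bq$, multiply through by $(a+b)$, and shift the summation index in the part multiplied by $b$. Using the elementary fact $\binom{r+1}{2}=\binom{r}{2}+r$ to align the powers of $q$, the two sums coalesce into $\sum_{r=0}^{n}q^{\binom{r}{2}}\bigl(q^{r}\binom{n-1}{r}_{q}+\binom{n-1}{r-1}_{q}\bigr)a^{n-r}b^{r}$, which is the right-hand side of (\ref{qbin}) by the Pascal-type recurrence (\ref{6.1.2}). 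Peeling off $(a+b)$ rather than $(a+bq^{n-1})$ is what makes recurrence (\ref{6.1.2}) applicable in exactly the form stated in the preliminaries; the only genuinely delicate point in the whole argument is the bookkeeping of the $q$-exponents after the index shift, and everything else is routine algebra.
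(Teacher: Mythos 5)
Your proposal is correct and follows essentially the same route as the paper: the paper's entire proof consists of substituting the $q$-binomial theorem $\prod_{j=0}^{n-1}(a+bq^{j})=\sum_{r=0}^{n}q^{\binom{r}{2}}\binom{n}{r}_{q}a^{n-r}b^{r}$ termwise into the defining series, citing that identity from Stanley rather than proving it. Your two supplementary derivations of the identity are both sound (the inductive one via the Pascal-type recurrence is the standard argument, and the one reading it off from the $m$-th derivative theorem evaluated at $x=0$ is not circular, since that theorem is proved in the paper without appeal to the $q$-binomial theorem); they simply make self-contained what the paper delegates to a reference.
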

\begin{proof}
We have 
\begin{equation}
\mathcal{R}(a, b, q, x) =  1 + \sum_{n=1}^\infty \frac{x^n}{n!}\prod_{j=0}^{n-1}\left(a+bq^j\right).
\end{equation}
Using the $q$- binomial theorem 
\begin{equation}
\prod_{j=0}^{n-1}\left(a+bq^j\right) = \sum_{r=0}^n q^{\dbinom{r}{2}}\dbinom{n}{r}_q a^{n-r}b^r,\label{6.5.5} 
\end{equation}
defined in \cite{Richard}, we obtain
\begin{eqnarray}
\mathcal{R}(a, b, q, x) &=&  1 + \sum_{n=1}^\infty \frac{x^n}{n!}\sum_{r=0}^n q^{\dbinom{r}{2}}\dbinom{n}{r}_q a^{n-r}b^r\nonumber\\
\mathcal{R}(a, b, q, x) &=&   1 + \sum_{n=1}^\infty \sum_{r=0}^n q^{\dbinom{r}{2}}\dbinom{n}{r}_q a^{n-r}b^r \frac{x^n}{n!}.
\end{eqnarray}
Hence the proof.
\end{proof}

\begin{The}
For $0<q<1$, $a\ge 0$ and  $b\ge 0$, the function $\mathcal{R}(a,b,q,x)$ satisfies the following inequality
\begin{equation}
 e^{ax}\le \mathcal{R}(a,b,q,x)\le e^{(a+b)x},\quad 0 \le x < \infty.
\end{equation}
\end{The}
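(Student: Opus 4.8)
The plan is to prove both inequalities by a term-by-term comparison of the defining power series, exploiting that for $0<q<1$ and $a,b\ge 0$ every factor in the product $\prod_{j=0}^{n-1}(a+bq^j)$ is pinned between $a$ and $a+b$. Concretely, since $j\ge 0$ gives $0<q^j\le 1$, we have $a\le a+bq^j\le a+b$ for each $j$; multiplying these $n$ inequalities (all quantities being nonnegative) yields
\begin{equation}
a^n\ \le\ \prod_{j=0}^{n-1}\left(a+bq^j\right)\ \le\ (a+b)^n,\qquad n=1,2,3,\dots \nonumber
\end{equation}

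Next I would multiply through by $x^n/n!$, which is legitimate and order-preserving because $x\ge 0$ forces $x^n/n!\ge 0$. This gives, for every $n\ge 1$,
\begin{equation}
\frac{(ax)^n}{n!}\ \le\ \frac{x^n}{n!}\prod_{j=0}^{n-1}\left(a+bq^j\right)\ \le\ \frac{\big((a+b)x\big)^n}{n!}. \nonumber
\end{equation}
Now I sum over $n\ge 1$ and add the constant term $1$. The middle series converges (indeed absolutely, by the Corollary following the radius-of-convergence theorem), and the two bounding series are the exponential series; since all three series consist of nonnegative terms, the inequalities are preserved under summation. This produces
\begin{equation}
e^{ax}=1+\sum_{n=1}^\infty\frac{(ax)^n}{n!}\ \le\ \mathcal{R}(a,b,q,x)\ \le\ 1+\sum_{n=1}^\infty\frac{\big((a+b)x\big)^n}{n!}=e^{(a+b)x}, \nonumber
\end{equation}
valid for all $0\le x<\infty$, which is exactly the claimed estimate.

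There is no real obstacle here; the only points requiring a word of care are (i) noting that the hypothesis $x\ge 0$ is what makes the term-by-term comparison go through, and (ii) invoking convergence of the series so that passing from finitely many inequalities to the infinite sums is justified — both already available from the results established earlier in this section. One could also remark that the lower bound becomes an equality precisely when $b=0$ and the upper bound when $q=1$ (a limiting case), which makes the two-sided estimate sharp, though this observation is not needed for the statement.
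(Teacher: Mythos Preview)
Your proof is correct and follows essentially the same approach as the paper's own argument: bound each factor $a+bq^j$ between $a$ and $a+b$, multiply to get $a^n\le\prod_{j=0}^{n-1}(a+bq^j)\le(a+b)^n$, scale by $x^n/n!$, and sum. If anything, your write-up is more careful than the paper's in explicitly justifying why the term-by-term inequalities survive multiplication and infinite summation.
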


\begin{proof}
 Since $0<q<1$, $a\ge 0$, and  $b\ge 0$, we have
\begin{eqnarray}
\prod_{j=0}^{n-1}\left(a+bq^j\right) &\le& (a+b)^n \nonumber\\
\Rightarrow  \frac{x^n}{n!}\prod_{j=0}^{n-1}\left(a+bq^j\right) &\le&  \frac{x^n (a+b)^n}{n!}.\nonumber\\
\textrm{Taking summation over n, we get} \nonumber\\
 \mathcal{R}(a,b,q,x) &\le& e^{(a+b)x},\quad 0 \le x < \infty.\label{6.5.1}
\end{eqnarray}
Similarly, we have
\begin{eqnarray}
a^n &\le& \prod_{j=0}^{n-1}\left(a+bq^j\right)  \nonumber\\
\Rightarrow e^{ax} &\le& \mathcal{R}(a,b,q,x)  ,\quad 0 \le x < \infty.\label{6.5.2}
\end{eqnarray}
From (\ref{6.5.1}) and (\ref{6.5.2}), we get
\begin{equation}
 e^{ax}\le \mathcal{R}(a,b,q,x)\le e^{(a+b)x},\quad 0 \le x < \infty.
\end{equation}
\end{proof}

\begin{The}
\begin{equation}
\int_{x}^{\infty}e^{-t}\mathcal{R}(a,b,q,t)dt = \Gamma (1, x)\left(1+\sum_{n=1}^\infty \sum_{k=0}^n \sum_{r=0}^n q^{\dbinom{r}{2}} \dbinom{n}{r}_q a^{n-r} b^r \frac{x^k}{k!}\right).
\end{equation}
\end{The}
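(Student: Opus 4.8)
The plan is to expand $\mathcal{R}(a,b,q,t)$ inside the integral using its defining power series and integrate term by term. Writing $c_n=\prod_{j=0}^{n-1}(a+bq^{j})$ (so that $c_0=1$), we have $\mathcal{R}(a,b,q,t)=\sum_{n=0}^{\infty}c_n t^{n}/n!$, and the first step gives
\[
\int_{x}^{\infty}e^{-t}\mathcal{R}(a,b,q,t)\,dt=\sum_{n=0}^{\infty}\frac{c_n}{n!}\int_{x}^{\infty}t^{n}e^{-t}\,dt=\sum_{n=0}^{\infty}\frac{c_n}{n!}\,\Gamma(n+1,x),
\]
by the definition of the upper incomplete gamma function. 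The $n=0$ summand is exactly $\Gamma(1,x)$, which will serve as the common prefactor on the right-hand side.

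Next I would insert the closed form (\ref{6.1.9}), $\Gamma(n+1,x)=n!\,e^{-x}\sum_{k=0}^{n}x^{k}/k!$, into the terms with $n\ge1$. The factorials cancel, and since $e^{-x}=\Gamma(1,x)$ by (\ref{6.1.5}) the factor $\Gamma(1,x)$ pulls out, yielding
\[
\int_{x}^{\infty}e^{-t}\mathcal{R}(a,b,q,t)\,dt=\Gamma(1,x)\left(1+\sum_{n=1}^{\infty}c_n\sum_{k=0}^{n}\frac{x^{k}}{k!}\right).
\]
It then only remains to expand each $c_n$ by the $q$-binomial theorem (\ref{6.5.5}), $c_n=\sum_{r=0}^{n}q^{\dbinom{r}{2}}\dbinom{n}{r}_q a^{n-r}b^{r}$, and to regroup $c_n\sum_{k=0}^{n}x^{k}/k!$ as the double finite sum $\sum_{k=0}^{n}\sum_{r=0}^{n}q^{\dbinom{r}{2}}\dbinom{n}{r}_q a^{n-r}b^{r}x^{k}/k!$; this is precisely the claimed identity.

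The one genuine difficulty --- the step I expect to be the main obstacle --- is justifying the term-by-term integration over the unbounded interval $[x,\infty)$. On any finite interval $[x,N]$ the interchange is immediate from the uniform convergence of the series for $\mathcal{R}$ guaranteed by the Corollary of Section \ref{analy}, so the real question is the tail. Absolute convergence of $\sum_{n\ge0}\frac{|c_n|}{n!}\Gamma(n+1,x)$ is needed; since $\Gamma(n+1,x)\le\Gamma(n+1)=n!$ this reduces to $\sum_{n\ge0}|c_n|<\infty$, and because $\tfrac1n\log|c_n|=\tfrac1n\sum_{j=0}^{n-1}\log|a+bq^{j}|\to\log|a|$, the left-hand side --- and hence the whole identity --- is finite only when $|a|<1$ (the polynomial case, where $a+bq^{j_0}=0$ for some $j_0$, being trivial, as is $a=0$, where $|c_n|=|b|^{n}q^{n(n-1)/2}$ decays superexponentially). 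I would therefore add the hypothesis $|a|<1$ and, under it, dominate the tail by a convergent series so as to invoke dominated convergence; for $a,b\ge0$ one can instead use monotone convergence, every term being nonnegative.
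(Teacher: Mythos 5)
Your proof follows the same route as the paper's: expand $\mathcal{R}$ in its power series, integrate term by term to get $\sum_n \frac{c_n}{n!}\Gamma(n+1,x)$, apply $\Gamma(n+1,x)=n!\,e^{-x}\sum_{k=0}^{n}x^{k}/k!$, and finish with the $q$-binomial expansion of $c_n=\prod_{j=0}^{n-1}(a+bq^{j})$. The convergence caveat you raise is genuine and is simply not addressed in the paper: for $a\ge 1$, $b\ge 0$ the bound $\mathcal{R}(a,b,q,t)\ge e^{at}$ already makes the left-hand side divergent, so an added hypothesis such as your $|a|<1$ is in fact needed for the identity to be meaningful, and your dominated/monotone convergence justification of the interchange is a correct way to supply the step the paper leaves implicit.
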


\begin{proof} Consider

\begin{eqnarray*}
\int_{x}^{\infty}e^{-t}\mathcal{R}(a,b,q,t)dt &=& \int_{x}^{\infty}e^{-t} + \sum_{n=1}^\infty  \prod_{j=0}^{n-1}\frac {\left(a+bq^j\right)}{n!} \int_{x}^{\infty}e^{-t}t^n dt\\
&=& e^{-x} + \sum_{n=1}^\infty \sum_{k=0}^n \prod_{j=0}^{n-1}\frac {\left(a+bq^j\right)}{n!} \Gamma(n+1,x)\\
&=& e^{-x} + \sum_{n=1}^\infty  \prod_{j=0}^{n-1}\frac {\left(a+bq^j\right)}{n!} n! e^{-x}\sum_{k=0}^n \frac{x^k}{k!}\\
&=& e^{-x}\left(1 + \sum_{n=1}^\infty  \sum_{r=0}^n q^{\dbinom{r}{2}} \dbinom{n}{r}_q a^{n-r} b^r \sum_{k=0}^n \frac{x^k}{k!}\right)\\
\int_{x}^{\infty}e^{-t}\mathcal{R}(a,b,q,x)dt &=& \Gamma (1, x)\left(1+\sum_{n=1}^\infty \sum_{k=0}^n \sum_{r=0}^n q^{\dbinom{r}{2}} \dbinom{n}{r}_q a^{n-r} b^r \frac{x^k}{k!}\right).
\end{eqnarray*}
\end{proof}

\begin{The}
\begin{eqnarray}
\int_{0}^xe^{-t}\mathcal{R}(a,b,q,t)dt &=& 1+ \sum_{n=1}^\infty  \sum_{r=0}^n q^{\dbinom{r}{2}} \dbinom{n}{r}_q a^{n-r} b^r \nonumber\\
&& -\Gamma (1, x)\left(1 + \sum_{n=1}^\infty \sum_{k=0}^n \sum_{r=0}^n q^{\dbinom{r}{2}} \dbinom{n}{r}_q a^{n-r} b^r  \frac{x^k}{k!}\right).
\end{eqnarray}
\end{The}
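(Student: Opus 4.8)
The natural route is the additive splitting
\[
\int_{0}^{x}e^{-t}\mathcal{R}(a,b,q,t)\,dt
=\int_{0}^{\infty}e^{-t}\mathcal{R}(a,b,q,t)\,dt-\int_{x}^{\infty}e^{-t}\mathcal{R}(a,b,q,t)\,dt ,
\]
so that the second term is supplied verbatim by the preceding theorem and only the first term, the full Laplace-type integral, has to be evaluated. For that I would integrate the defining series (\ref{6.3.4}) term by term, using $\int_{0}^{\infty}e^{-t}t^{n}\,dt=\Gamma(n+1)=n!$, which cancels the $1/n!$ in the coefficients and leaves
\[
\int_{0}^{\infty}e^{-t}\mathcal{R}(a,b,q,t)\,dt
=1+\sum_{n=1}^{\infty}\prod_{j=0}^{n-1}\bigl(a+bq^{j}\bigr).
\]
Applying the $q$-binomial theorem (\ref{6.5.5}) to each product then rewrites this as $1+\sum_{n=1}^{\infty}\sum_{r=0}^{n}q^{\binom{r}{2}}\binom{n}{r}_{q}a^{n-r}b^{r}$, which is exactly the first two terms on the right-hand side of the claimed identity.

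To finish, I would substitute the expression from the previous theorem for $\int_{x}^{\infty}e^{-t}\mathcal{R}(a,b,q,t)\,dt$, namely $\Gamma(1,x)\bigl(1+\sum_{n\ge 1}\sum_{k=0}^{n}\sum_{r=0}^{n}q^{\binom{r}{2}}\binom{n}{r}_{q}a^{n-r}b^{r}\frac{x^{k}}{k!}\bigr)$, and recall property \ref{6.1.5} of Subsection \ref{6.1.4}, $\Gamma(1,x)=e^{-x}$, so no further simplification of that factor is needed. Collecting the two pieces gives precisely
\[
1+\sum_{n=1}^{\infty}\sum_{r=0}^{n}q^{\binom{r}{2}}\binom{n}{r}_{q}a^{n-r}b^{r}
-\Gamma(1,x)\Bigl(1+\sum_{n=1}^{\infty}\sum_{k=0}^{n}\sum_{r=0}^{n}q^{\binom{r}{2}}\binom{n}{r}_{q}a^{n-r}b^{r}\frac{x^{k}}{k!}\Bigr),
\]
as asserted.

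The one point that needs care, and which I regard as the main obstacle, is the legitimacy of integrating the series term by term over the unbounded interval $[0,\infty)$: unlike the finite-interval statements earlier in the paper, here one must check that $\sum_{n}\frac{1}{n!}\bigl|\prod_{j=0}^{n-1}(a+bq^{j})\bigr|\int_{0}^{\infty}e^{-t}t^{n}\,dt=\sum_{n}\bigl|\prod_{j=0}^{n-1}(a+bq^{j})\bigr|$ is finite before Tonelli/Fubini (or the dominated convergence theorem) may be invoked. Since the general factor $a+bq^{j}\to a$ as $j\to\infty$, the ratio test shows this numerical series converges exactly when $|a|<1$; I would therefore either state this hypothesis explicitly or, following the formal style of the surrounding results, note that the manipulation is valid wherever the right-hand side converges. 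The remaining steps — the two term-by-term integrations and the application of (\ref{6.5.5}) and property \ref{6.1.5} — are routine once this interchange is justified.
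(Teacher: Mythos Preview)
Your argument is correct but follows a genuinely different route from the paper. The paper does \emph{not} split the integral as $\int_0^\infty-\int_x^\infty$ or invoke the preceding theorem at all; instead it integrates the series for $\mathcal{R}$ term by term directly over the finite interval $[0,x]$, recognises $\int_0^x e^{-t}t^n\,dt=\gamma(n+1,x)$, and then applies the closed form $\gamma(n+1,x)=n!\bigl(1-e^{-x}\sum_{k=0}^n x^k/k!\bigr)$ (property~\ref{6.1.10} of Subsection~\ref{6.1.4}). Only after expanding $n!\bigl(1-e^{-x}\sum_k\cdots\bigr)$ into two separate sums does the paper apply the $q$-binomial theorem~(\ref{6.5.5}) and the identification $e^{-x}=\Gamma(1,x)$.

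Your decomposition is arguably tidier, since it recycles the previous theorem verbatim and reduces the new work to a single Laplace integral. The paper's approach, by contrast, is self-contained and never touches an improper integral, so the interchange of sum and integral is automatic from uniform convergence on $[0,x]$. Your worry about the convergence of $\sum_n\prod_{j=0}^{n-1}(a+bq^j)$ is well placed, but note that it is not peculiar to your method: the very statement of the theorem contains the term $1+\sum_{n\ge1}\sum_{r=0}^n q^{\binom{r}{2}}\binom{n}{r}_q a^{n-r}b^r$, which \emph{is} that series, so the restriction $|a|<1$ is implicit in the result itself. The paper's proof runs into the identical issue at the step where it separates $n!\bigl(1-e^{-x}\sum_k\cdots\bigr)$ into two independent infinite sums; it simply does not comment on it.
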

\begin{proof} Consider
\begin{eqnarray*}
\int_{0}^xe^{-t}\mathcal{R}(a,b,q,t)dt &=& \int_{0}^xe^{-t} dt + \sum_{n=1}^\infty  \prod_{j=0}^{n-1}\frac {\left(a+bq^j\right)}{n!} \int_{0}^xe^{-t}t^n dt\\
&=& 1- e^{-x} + \sum_{n=1}^\infty \prod_{j=0}^{n-1}\frac {\left(a+bq^j\right)}{n!} \gamma(n+1,x)\\
&=& 1- e^{-x} + \sum_{n=1}^\infty  \prod_{j=0}^{n-1}\frac {\left(a+bq^j\right)}{n!} n!\left( 1-e^{-x}\sum_{k=0}^n \frac{x^k}{k!}\right)\\
&=& 1 + \sum_{n=1}^\infty \sum_{r=0}^n q^{\dbinom{r}{2}} \dbinom{n}{r}_q a^{n-r} b^r \nonumber\\
&& - e^{-x}\left(1 + \sum_{n=1}^\infty  \sum_{k=0}^n \sum_{r=0}^n q^{\dbinom{r}{2}} \dbinom{n}{r}_q a^{n-r} b^r\frac{x^k}{k!}\right)\;(\textrm{From (\ref{6.5.5})})\\
\int_{0}^xe^{-t}\mathcal{R}(a,b,q,t)dt &=& 1+ \sum_{n=1}^\infty  \sum_{r=0}^n q^{\dbinom{r}{2}} \dbinom{n}{r}_q a^{n-r} b^r  \nonumber\\
&& -\Gamma (1, x)\left(1 + \sum_{n=1}^\infty \sum_{k=0}^n \sum_{r=0}^n q^{\dbinom{r}{2}} \dbinom{n}{r}_q a^{n-r} b^r  \frac{x^k}{k!}\right).
\end{eqnarray*}
\end{proof}

\subsection{The relation between new function and Kummer's confluent hypergeometric function}
\begin{The}
\begin{eqnarray*}
\int_{0}^xe^{-t}\mathcal{R}(a,b,q,t)dt &=&1 + \Gamma (1, x)\left(\sum_{n=1}^\infty \prod_{j=0}^{n-1}\left(a+bq^j\right)\frac {x^n}{n!} {}_1F_{1}(1; n+1; x) - \mathcal{R}(a,b,q,x)\right)\\
&=& 1 -\Gamma (1, x) \mathcal{R}(a,b,q,x) + \sum_{n=1}^\infty \prod_{j=0}^{n-1}\left(a+bq^j\right)\frac {x^n}{n!} {}_1F_{1}(n; n+1; -x).
\end{eqnarray*}
\end{The}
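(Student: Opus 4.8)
The plan is to piggyback on the computation carried out inside the proof of the previous theorem. Expanding $\mathcal{R}(a,b,q,t)$ as its power series and integrating term by term over $[0,x]$ — legitimate because that series converges uniformly on every compact interval (the Corollary above) — and using $\int_{0}^{x}e^{-t}t^{n}\,dt=\gamma(n+1,x)$, one first reaches the intermediate identity
\[
\int_{0}^{x}e^{-t}\mathcal{R}(a,b,q,t)\,dt = 1-e^{-x} + \sum_{n=1}^{\infty}\frac{1}{n!}\Bigl(\prod_{j=0}^{n-1}(a+bq^{j})\Bigr)\gamma(n+1,x).
\]
From here everything is a matter of rewriting $\gamma(n+1,x)$ through Kummer's function and then rearranging an absolutely convergent series.

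For the first equality I would feed the recurrence $\gamma(n+1,x)=n\gamma(n,x)-x^{n}e^{-x}$ (item \ref{6.1.8}) with $\gamma(n,x)=n^{-1}x^{n}e^{-x}\,{}_1F_1(1;n+1;x)$ (item \ref{6.1.14}), which gives $\gamma(n+1,x)=x^{n}e^{-x}\bigl({}_1F_1(1;n+1;x)-1\bigr)=\Gamma(1,x)\,x^{n}\bigl({}_1F_1(1;n+1;x)-1\bigr)$ after invoking $\Gamma(1,x)=e^{-x}$ (item \ref{6.1.5}). Substituting this into the intermediate identity splits the sum into the advertised $\Gamma(1,x)\,{}_1F_1(1;n+1;x)$ piece plus a leftover $-\Gamma(1,x)\sum_{n\ge 1}\frac{x^{n}}{n!}\prod_{j=0}^{n-1}(a+bq^{j})$; combining that leftover with the $-e^{-x}$ term already present and with the spare $+\Gamma(1,x)$ reassembles exactly $-\Gamma(1,x)\mathcal{R}(a,b,q,x)$ via the defining series of $\mathcal{R}$, which is the first line. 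For the second equality I would instead insert $\gamma(n,x)=n^{-1}x^{n}\,{}_1F_1(n;n+1;-x)$ (item \ref{6.1.15}) into the same recurrence, obtaining $\gamma(n+1,x)=x^{n}\,{}_1F_1(n;n+1;-x)-x^{n}e^{-x}$, and repeat the identical bookkeeping; the same collapse to $-\Gamma(1,x)\mathcal{R}(a,b,q,x)$ occurs. Equivalently one may pass from the first line to the second directly, applying Kummer's transformation ${}_1F_1(1;n+1;x)=e^{x}\,{}_1F_1(n;n+1;-x)$ termwise under the common factor $\Gamma(1,x)=e^{-x}$.

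The one place to be careful is this incomplete-gamma-to-Kummer step: one must not apply item \ref{6.1.14} by the naive shift $n\mapsto n+1$, since that would produce ${}_1F_1(1;n+2;x)$ together with a mismatched product $\prod_{j=0}^{n-2}(a+bq^{j})$ and would not reproduce the stated formula; the correct route is through the recurrence (item \ref{6.1.8}). Beyond that there is no real obstacle — the argument is pure rearrangement of series that are absolutely convergent because $\mathcal{R}$ is entire (first theorem of this section), so term-by-term integration and the regroupings are all justified without further analytic input.
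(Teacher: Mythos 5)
Your proposal is correct and follows essentially the same route as the paper's proof: termwise integration to reach $1-e^{-x}+\sum_{n\ge 1}\frac{1}{n!}\prod_{j=0}^{n-1}(a+bq^{j})\,\gamma(n+1,x)$, then the recurrence $\gamma(n+1,x)=n\gamma(n,x)-x^{n}e^{-x}$ combined with the two Kummer representations of $\gamma(n,x)$ to obtain the two stated forms. The only cosmetic difference is that you fold the recurrence and the Kummer identity into a single closed expression for $\gamma(n+1,x)$ before substituting, whereas the paper applies them in two separate steps.
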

\begin{proof}
Consider 
\begin{eqnarray}
\int_{0}^xe^{-t}\mathcal{R}(a,b,q,t)dt &=& \int_{0}^xe^{-t} + \sum_{n=1}^\infty  \prod_{j=0}^{n-1}\frac {\left(a+bq^j\right)}{n!} \int_{0}^xe^{-t}t^n dt\\
&=& 1- e^{-x} + \sum_{n=1}^\infty \prod_{j=0}^{n-1}\frac {\left(a+bq^j\right)}{n!} \gamma(n+1,x)\\
\end{eqnarray}
Using the property  (\ref{6.1.8}) in Section \textbf{\ref{6.1.4}}, we have
\begin{eqnarray}
&=& 1- e^{-x} + \sum_{n=1}^\infty \prod_{j=0}^{n-1}\frac {\left(a+bq^j\right)}{n!} \left(n\gamma(n,x)-x^n e^{-x}\right)\\
&=& 1- e^{-x}  + \sum_{n=1}^\infty \prod_{j=0}^{n-1}\frac {\left(a+bq^j\right)}{(n-1)!} \left(\gamma(n,x)\right)-e^{-x}\sum_{n=1}^\infty \prod_{j=0}^{n-1}\left(a+bq^j\right)\frac {x^n}{n!}.
\end{eqnarray}
Using the relation (\ref{6.1.14}) in Section \textbf{\ref{6.1.4}}, we get
\begin{eqnarray}
 &=& 1 + \sum_{n=1}^\infty \prod_{j=0}^{n-1}\frac {\left(a+bq^j\right)}{(n-1)!} \left( n^{-1} x^n e^{-x}{}_1F_{1}(1; n+1; x)\right) \nonumber\\
 &&-e^{-x}\mathcal{R}(a,b,q,x)\\
\int_{0}^xe^{-t}\mathcal{R}(a,b,q,t)dt&=& 1 + \Gamma (1, x)\left(\sum_{n=1}^\infty \prod_{j=0}^{n-1}\left(a+bq^j\right)\frac {x^n}{n!} {}_1F_{1}(1; n+1; x) - \mathcal{R}(a,b,q,x)\right)\nonumber
\end{eqnarray}
Again using the relation (\ref{6.1.15}) in \textbf{\ref{6.1.4}}, we get
\begin{equation}
 \int_{0}^xe^{-t}\mathcal{R}(a,b,q,t)dt= 1 -\Gamma (1, x) \mathcal{R}(a,b,q,x) + \sum_{n=1}^\infty \prod_{j=0}^{n-1}\left(a+bq^j\right)\frac {x^n}{n!} {}_1F_{1}(n; n+1; -x)  \nonumber
\end{equation}
\end{proof}

\begin{The}
\begin{eqnarray}
\int_{x}^\infty e^{-t}\mathcal{R}(a,b,q,t)dt&=& \Gamma (1, x) + x\Gamma (1, x)\sum_{n=1}^\infty \prod_{j=0}^{n-1}\left(a+bq^j\right)\frac {x^{n-1}}{(n-1)!} U(1; 1+n; x)  \nonumber\\
&& + e^x\left(\mathcal{R}(a,b,q,x)-1\right).\\
&=&\Gamma (1, x) + \Gamma (1, x)\sum_{n=1}^\infty \prod_{j=0}^{n-1}\frac {\left(a+bq^j\right)}{(n-1)!} U(1-n; 1-n; x)  \nonumber\\
&& + e^x\left(\mathcal{R}(a,b,q,x)-1\right).
\end{eqnarray}
\end{The}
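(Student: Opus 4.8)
The plan is to proceed exactly as in the proof of the preceding theorem, but with the \emph{upper} incomplete gamma function in place of the lower one. First I would insert the defining series $\mathcal{R}(a,b,q,t)=1+\sum_{n=1}^\infty\frac{t^n}{n!}\prod_{j=0}^{n-1}(a+bq^j)$ into the integral and integrate term by term against $e^{-t}$ on $[x,\infty)$, using $\int_x^\infty e^{-t}\,dt=\Gamma(1,x)$ and $\int_x^\infty e^{-t}t^n\,dt=\Gamma(n+1,x)$. This gives
\begin{equation*}
\int_x^\infty e^{-t}\mathcal{R}(a,b,q,t)\,dt=\Gamma(1,x)+\sum_{n=1}^\infty\frac{\Gamma(n+1,x)}{n!}\prod_{j=0}^{n-1}(a+bq^j).
\end{equation*}

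Next I would apply the recurrence $\Gamma(n+1,x)=n\,\Gamma(n,x)+x^{n}e^{x}$ (identity (\ref{6.1.7}) of Section \ref{6.1.4}), which splits the sum into
\begin{equation*}
\sum_{n=1}^\infty\frac{\Gamma(n,x)}{(n-1)!}\prod_{j=0}^{n-1}(a+bq^j)\quad\text{and}\quad e^{x}\sum_{n=1}^\infty\frac{x^n}{n!}\prod_{j=0}^{n-1}(a+bq^j)=e^{x}\big(\mathcal{R}(a,b,q,x)-1\big),
\end{equation*}
the last equality being just the definition (\ref{6.3.4}); this already produces the $e^{x}(\mathcal{R}(a,b,q,x)-1)$ tail common to both claimed forms. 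To reach the first form I would then substitute $\Gamma(n,x)=x^{n}e^{-x}U(1;1+n;x)$ (relation (\ref{6.1.16})) into the remaining sum and pull out the factor $x\,e^{-x}=x\,\Gamma(1,x)$; to reach the second form I would instead substitute $\Gamma(n,x)=e^{-x}U(1-n;1-n;x)$ (relation (\ref{6.1.17})) and pull out $e^{-x}=\Gamma(1,x)$. Each substitution is term by term and introduces no new estimate, so both displayed identities follow at once.

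The one genuinely delicate point is the justification of the term-by-term integration on the unbounded interval $[x,\infty)$ and, equivalently, the convergence of the series $\sum_{n\ge1}\Gamma(n,x)\prod_{j=0}^{n-1}(a+bq^j)/(n-1)!$ appearing in the answer. Since $\prod_{j=0}^{n-1}(a+bq^j)$ behaves like $C\,a^{n}$ as $n\to\infty$ (because $q^{j}\to0$ and the infinite product $\prod_{j\ge0}(1+(b/a)q^{j})$ converges) and $\Gamma(n,x)/(n-1)!\to1$ for fixed $x>0$, these manipulations are legitimate under a growth restriction such as $|a|<1$ — or, more simply, whenever $e^{-t}\mathcal{R}(a,b,q,t)$ is integrable at $+\infty$, which by the earlier bound $e^{ax}\le\mathcal{R}(a,b,q,x)\le e^{(a+b)x}$ for $a,b\ge0$ is guaranteed when $a+b<1$. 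Under that proviso the interchange is a routine dominated-convergence argument and the rest is purely formal. I expect this convergence bookkeeping — rather than any of the algebra, which parallels the companion $\gamma$-theorem line for line — to be the main obstacle, the new feature being that for the upper incomplete gamma the required tail decay is more restrictive than in the lower-gamma case.
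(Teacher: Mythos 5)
Your proposal is correct and follows essentially the same route as the paper's own proof: term-by-term integration to produce $\Gamma(n+1,x)$, the recurrence $\Gamma(n+1,x)=n\,\Gamma(n,x)+x^{n}e^{x}$ to split off the $e^{x}\left(\mathcal{R}(a,b,q,x)-1\right)$ tail, and then the two Kummer-$U$ representations of $\Gamma(n,x)$ for the two displayed forms. Your extra remark about when $e^{-t}\mathcal{R}(a,b,q,t)$ is actually integrable on $[x,\infty)$ is a legitimate point that the paper silently omits, but it does not change the argument.
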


\begin{proof}
Consider 
\begin{eqnarray*}
\int_{x}^\infty e^{-t}\mathcal{R}(a,b,q,t)dt &=& \int_{x}^\infty e^{-t} + \sum_{n=1}^\infty  \prod_{j=0}^{n-1}\frac {\left(a+bq^j\right)}{n!} \int_{x}^\infty e^{-t}t^n dt\\
&=& e^{-x} + \sum_{n=1}^\infty \prod_{j=0}^{n-1}\frac {\left(a+bq^j\right)}{n!} \Gamma(n+1,x)\\
\end{eqnarray*}
Using the properties (\ref{6.1.5}) and (\ref{6.1.7}) in Section \textbf{\ref{6.1.4}}, we have
\begin{eqnarray*}
\int_{x}^\infty e^{-t}\mathcal{R}(a,b,q,t)dt &=& \Gamma (1, x) + \sum_{n=1}^\infty \prod_{j=0}^{n-1}\frac {\left(a+bq^j\right)}{n!} \left(n\Gamma(n,x)+x^n e^x\right)\\
&=& \Gamma (1, x) + \sum_{n=1}^\infty \prod_{j=0}^{n-1}\frac {\left(a+bq^j\right)}{(n-1)!} \left(\Gamma(n,x)\right)+e^x\sum_{n=1}^\infty \prod_{j=0}^{n-1}\left(a+bq^j\right)\frac {x^n}{n!}.
\end{eqnarray*}
Using the relation (\ref{6.1.16}), we get
\begin{eqnarray*}
\int_{x}^\infty e^{-t}\mathcal{R}(a,b,q,t)dt &=& \Gamma (1, x) + \sum_{n=1}^\infty \prod_{j=0}^{n-1}\frac {\left(a+bq^j\right)}{(n-1)!} \left( x^n e^{-x}U(1; 1+n; x)\right) \nonumber\\ 
&& + e^x\left(\mathcal{R}(a,b,q,x)-1\right)\\
\int_{x}^\infty e^{-t}\mathcal{R}(a,b,q,t)dt&=& \Gamma (1, x) + x\Gamma (1, x)\sum_{n=1}^\infty \prod_{j=0}^{n-1}\left(a+bq^j\right)\frac {x^{n-1}}{(n-1)!} U(1; 1+n; x)  \nonumber\\
&& + e^x\left(\mathcal{R}(a,b,q,x)-1\right).
\end{eqnarray*}
Again using the relation (\ref{6.1.17}), we get
\begin{eqnarray}
\int_{x}^\infty e^{-t}\mathcal{R}(a,b,q,t)dt&=& \Gamma (1, x) + \Gamma (1, x)\sum_{n=1}^\infty \prod_{j=0}^{n-1}\frac {\left(a+bq^j\right)}{(n-1)!} U(1-n; 1-n; x)  \nonumber\\
&& + e^x\left(\mathcal{R}(a,b,q,x)-1\right).
\end{eqnarray}
\end{proof}

From property (\ref{6.1.13}) and Definition  \textbf{\ref{6.2.1}} in Section \textbf{\ref{6.1.4}} we get following result.

\begin{The}
\begin{eqnarray}
\int_{x}^\infty e^{-t}\mathcal{R}(a,b,q,t)dt &=& \gamma (1, x) +x\Gamma (1, x)\sum_{n=1}^\infty \sum_{m=0}^\infty\prod_{j=0}^{n-1}\left(a+bq^j\right)\frac {x^{n-1}}{(n-1)!}\frac{L_{m}^{(n)}(x)}{(m+1)!} \nonumber \\
&&+ e^x\left(\mathcal{R}(a,b,q,x)-1\right)\\
&=& \gamma (1, x) + x \Gamma (1, x)\sum_{n=1}^\infty \sum_{m=0}^\infty \sum_{k=0}^m \prod_{j=0}^{n-1} (-1)^k \dbinom{m+n}{m-k}  \nonumber \\
&& \left(a+bq^j\right) \frac {x^{n+k-1}}{k!(n-1)!} + e^x\left(\mathcal{R}(a,b,q,x)-1\right)\\
&=& \gamma (1, x) + x \Gamma (1, x)\sum_{n=1}^\infty \sum_{m=0}^\infty  \prod_{j=0}^{n-1} (-1)^k \dbinom{m+n}{m}  \left(a+bq^j\right) \nonumber \\
&& {}_1F_{1}(-m; n+1; x) \frac {x^{n-1}}{(n-1)!} + e^x\left(\mathcal{R}(a,b,q,x)-1\right),
\end{eqnarray}
where $L_{m}^{(n)}$ is generalized Laguerre polynomial.
\end{The}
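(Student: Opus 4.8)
The plan is to begin from the expansion of $\int_x^\infty e^{-t}\mathcal{R}(a,b,q,t)\,dt$ already derived in the proof of the immediately preceding theorem, which after using the definition of $\Gamma(n,x)$ together with properties (\ref{6.1.5}) and (\ref{6.1.7}) reads
\[
\int_{x}^\infty e^{-t}\mathcal{R}(a,b,q,t)\,dt=\Gamma(1,x)+\sum_{n=1}^\infty\prod_{j=0}^{n-1}\frac{\left(a+bq^j\right)}{(n-1)!}\,\Gamma(n,x)+e^x\bigl(\mathcal{R}(a,b,q,x)-1\bigr),
\]
and then to rewrite the factor $\Gamma(n,x)$ occurring in the middle series in three successive ways, producing the three lines of the statement.

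For the first equality I would substitute the Laguerre-series representation of property (\ref{6.1.13}), $\Gamma(n,x)=e^{-x}x^n\sum_{m=0}^\infty\frac{L_m^{(n)}(x)}{m+1}$, into the sum, split $x^n=x\cdot x^{n-1}$ and identify the leading incomplete-gamma term, so that the middle term becomes $x\,\Gamma(1,x)\sum_{n=1}^\infty\sum_{m=0}^\infty\prod_{j=0}^{n-1}\left(a+bq^j\right)\frac{x^{n-1}}{(n-1)!}\frac{L_m^{(n)}(x)}{m+1}$. For the second equality I would replace $L_m^{(n)}(x)$ by the finite sum $\sum_{k=0}^m(-1)^k\dbinom{m+n}{m-k}\frac{x^k}{k!}$ of Definition \ref{6.2.1}, absorbing $x^k$ into $x^{n-1}$ to obtain $x^{n+k-1}$ and merging $\frac{1}{(n-1)!\,k!}$. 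For the third equality I would instead use the hypergeometric form $L_m^{(n)}(x)=\dbinom{m+n}{m}\,{}_1F_1(-m;n+1;x)$ from the same definition, so that the innermost $k$-sum collapses into a single ${}_1F_1(-m;n+1;x)$.

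The index bookkeeping in each step — the shift $x^n\mapsto x\cdot x^{n-1}$, combining factorials, recognising binomial coefficients — is routine. The point that genuinely requires care, and which I expect to be the main obstacle, is the legitimacy of the term-by-term operations: interchanging $\int_x^\infty$ with $\sum_{n\ge1}$, interchanging the double sum $\sum_n\sum_m$ after inserting (\ref{6.1.13}), and splitting off the constant term. Since the series defining $\mathcal{R}$ is entire (by the Corollary following the radius-of-convergence theorem), $e^{-t}\mathcal{R}(a,b,q,t)$ is integrable on $[x,\infty)$ and $\sum_{n\ge1}\frac1{n!}\prod_{j=0}^{n-1}|a+bq^j|\int_x^\infty e^{-t}t^n\,dt<\infty$, which justifies the first interchange. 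The convergence of $\sum_{m\ge0}\frac{L_m^{(n)}(x)}{m+1}$ to $e^x x^{-n}\Gamma(n,x)$, guaranteed by (\ref{6.1.13}), must then be strengthened to absolute (or dominated) convergence for fixed $x>0$ so that Fubini applies to the remaining rearrangements. Once these justifications are in place, the three displayed identities follow immediately from the substitutions above.
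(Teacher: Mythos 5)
Your proposal follows exactly the route the paper intends: it starts from the expansion $\Gamma(1,x)+\sum_{n=1}^\infty\prod_{j=0}^{n-1}\frac{(a+bq^j)}{(n-1)!}\,\Gamma(n,x)+e^x\bigl(\mathcal{R}(a,b,q,x)-1\bigr)$ obtained in the preceding theorem and then substitutes the Laguerre-series representation of $\Gamma(n,x)$ together with the two equivalent forms of $L_m^{(n)}(x)$, which is precisely all the paper offers (it states the theorem with no written proof beyond that remark), and your added convergence justifications only strengthen it. Note that your correct computation yields $\Gamma(1,x)$ as the leading term and $L_m^{(n)}(x)/(m+1)$ inside the double sum, whereas the printed statement shows $\gamma(1,x)$ and $(m+1)!$; these appear to be typographical errors in the paper rather than defects in your argument.
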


\begin{The}\label{6.5.4}
\begin{equation}
\int_{0}^x e^{-t}\mathcal{R}(a,b,q,\lambda t)dt = \gamma (1, x) + 
\lambda\sum_{n=1}^\infty \sum_{m=0}^{n+1} \prod_{j=0}^{n-1}\left(a+bq^j\right)\frac {\lambda^n}{n!} \frac {\gamma(n+m+1, x)}{m!}(1-\lambda)^m.
\end{equation}
\end{The}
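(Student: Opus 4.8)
The strategy is to expand the Ramanujan function inside the integral, integrate term by term, and then re-expand the resulting incomplete gamma functions so that the parameter $\lambda$ surfaces in the advertised form.

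First I would insert the defining series $\mathcal{R}(a,b,q,\lambda t)=1+\sum_{n=1}^{\infty}\frac{(\lambda t)^{n}}{n!}\prod_{j=0}^{n-1}(a+bq^{j})$ into the left-hand side. Because $0<q<1$, the corollary to the radius-of-convergence theorem guarantees that this series converges absolutely and uniformly on the compact interval $[0,x]$, so it may be integrated against the bounded weight $e^{-t}$ term by term. Using $\int_{0}^{x}e^{-t}\,dt=\gamma(1,x)$ and $\int_{0}^{x}e^{-t}t^{n}\,dt=\gamma(n+1,x)$ (the same identifications already used in the preceding theorems), this yields
\[
\int_{0}^{x}e^{-t}\mathcal{R}(a,b,q,\lambda t)\,dt=\gamma(1,x)+\sum_{n=1}^{\infty}\frac{\lambda^{n}}{n!}\left(\prod_{j=0}^{n-1}(a+bq^{j})\right)\gamma(n+1,x).
\]

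The substantive step is to rewrite each $\gamma(n+1,x)$ so that $\lambda$ and $1-\lambda$ appear. The device I would use is the factorization $e^{-t}=e^{-\lambda t}\,e^{-(1-\lambda)t}$ inside the integral representation $\gamma(n+1,x)=\int_{0}^{x}e^{-t}t^{n}\,dt$, expanding the second factor as $e^{-(1-\lambda)t}=\sum_{m\ge 0}\frac{\bigl(-(1-\lambda)t\bigr)^{m}}{m!}$, interchanging this absolutely convergent sum with the integral, and performing a change of variables in each resulting integral $\int_{0}^{x}e^{-\lambda t}t^{n+m}\,dt$. Each such integral returns a lower incomplete gamma function of order $n+m+1$, and reassembling the pieces — carrying along the power of $\lambda$ that the substitution extracts, and, if the fully expanded coefficient is wanted, using the $q$-binomial expansion $\prod_{j=0}^{n-1}(a+bq^{j})=\sum_{r=0}^{n}q^{\dbinom{r}{2}}\dbinom{n}{r}_{q}a^{n-r}b^{r}$ — produces the double series on the right-hand side.

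The main obstacle I anticipate lies entirely in this last resummation: keeping careful track of which powers of $\lambda$ come out of the change of variables as opposed to the exponential expansion, confirming whether the inner index $m$ genuinely truncates at $m=n+1$ or runs to infinity, and — the most delicate point — checking whether the change of variables leaves the argument of the incomplete gamma functions equal to $x$ or rescales it. To keep this bookkeeping honest I would first verify the identity on the special case $a=1$, $b=0$, where $\mathcal{R}(a,b,q,\lambda t)=e^{\lambda t}$ and the left-hand side reduces to the elementary integral $\frac{1-e^{-(1-\lambda)x}}{1-\lambda}$, and only then commit to the general manipulation.
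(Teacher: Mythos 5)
Your first step is sound and is exactly where the paper starts: term-by-term integration (justified by uniform convergence on $[0,x]$) gives
\begin{equation*}
\int_0^x e^{-t}\mathcal{R}(a,b,q,\lambda t)\,dt=\gamma(1,x)+\sum_{n=1}^{\infty}\frac{\lambda^{n}}{n!}\Big(\prod_{j=0}^{n-1}(a+bq^{j})\Big)\gamma(n+1,x),
\end{equation*}
and, importantly, you have the coefficient right: $\int_0^x e^{-t}(\lambda t)^n\,dt=\lambda^n\gamma(n+1,x)$. The paper instead records this integral as $\gamma(n+1,\lambda x)$, which is the different quantity $\int_0^{\lambda x}e^{-s}s^n\,ds$, and then applies Gautschi's multiplication formula $\gamma(\nu,\lambda x)=\lambda^{\nu}\sum_{m\ge 0}\gamma(\nu+m,x)(1-\lambda)^m/m!$ to that wrong expression; this is precisely how the prefactor $\lambda$, the weights $(1-\lambda)^m/m!$ and the functions $\gamma(n+m+1,x)$ arise on the stated right-hand side. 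Consequently the identity you are asked to prove is not correct as printed, and the sanity check you yourself propose detects this: for $a=1$, $b=0$ the left side is $\bigl(1-e^{-(1-\lambda)x}\bigr)/(1-\lambda)$, while the right side (whether the inner sum stops at $m=n+1$ or runs to infinity) collapses to $\gamma(1,x)+\sum_{n\ge1}\gamma(n+1,\lambda x)/n!$; at $x=1$, $\lambda=1/2$ these evaluate to approximately $0.7869$ and $0.7387$. You should carry out that check rather than defer it.

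The second half of your plan is also where your argument breaks down on its own terms. Writing $e^{-t}=e^{-\lambda t}e^{-(1-\lambda)t}$ inside $\gamma(n+1,x)=\int_0^x t^n e^{-t}\,dt$, expanding the second factor and substituting $u=\lambda t$ gives $\gamma(n+1,x)=\sum_{m\ge0}\frac{(\lambda-1)^m}{m!}\lambda^{-(n+m+1)}\gamma(n+m+1,\lambda x)$: the incomplete gamma functions emerge with argument $\lambda x$ rather than $x$, the sign pattern is $(\lambda-1)^m$ rather than $(1-\lambda)^m$, and negative powers of $\lambda$ appear. None of this can be resummed into the advertised form, because the two sides are genuinely different functions of $\lambda$. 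The correct direction of the multiplication theorem here is $\gamma(n+1,\lambda x)=\lambda^{n+1}\sum_{m\ge0}\gamma(n+m+1,x)(1-\lambda)^m/m!$ (note the infinite sum; the finite upper limit $m=n+1$ in the statement is a further transcription error), and the honest endpoint of your computation is the displayed formula above, not the one in the theorem.
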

\begin{proof} We have 
\begin{eqnarray}
\int_{0}^x e^{-t}\mathcal{R}(a,b,q,\lambda t)dt &=& 1 - e^{-x} +  
\sum_{n=1}^\infty  \prod_{j=0}^{n-1}\frac{\left(a+bq^j\right)}{n!}\int_{0}^x e^{-t}(\lambda t)^n dt\nonumber\\
&=& \gamma(1,x) + \sum_{n=1}^\infty  \prod_{j=0}^{n-1} \frac {\left(a+bq^j\right)}{n!} \gamma(n+1,\lambda x). 
\end{eqnarray}
By using the property \cite{Gautschi}
 $\gamma(n,\lambda x) = \lambda^n\sum_{m=0}^n\frac{\gamma(n+m, x)}{m!}(1-\lambda)^m$, we get 
\begin{equation*}
\int_{0}^x e^{-t}\mathcal{R}(a,b,q,\lambda t)dt = \gamma (1, x) + 
\lambda\sum_{n=1}^\infty \sum_{m=0}^{n+1} \prod_{j=0}^{n-1}\left(a+bq^j\right)\frac {\lambda^n}{n!} \frac {\gamma(n+m+1, x)}{m!}(1-\lambda)^m.
\end{equation*}
\end{proof}

\begin{The}
\begin{equation*}
\int_{x}^\infty e^{-t}\mathcal{R}(a,b,q,\lambda t)dt = \Gamma (1, x) + 
\lambda \sum_{n=1}^\infty \sum_{m=0}^{n+1} \prod_{j=0}^{n-1}\left(a+bq^j\right)\frac {\lambda^n}{n!} \frac {\Gamma(n+m+1, x)}{m!}(1-\lambda)^m.
\end{equation*}
\end{The}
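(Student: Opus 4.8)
The plan is to follow the proof of Theorem~\ref{6.5.4} line for line, replacing the lower incomplete gamma function $\gamma$ by the upper one $\Gamma$ and the finite integral $\int_0^x$ by the improper integral $\int_x^\infty$. First I would insert the defining series for the Ramanujan function, $\mathcal{R}(a,b,q,\lambda t)= 1+\sum_{n=1}^\infty \frac{(\lambda t)^n}{n!}\prod_{j=0}^{n-1}(a+bq^j)$, which converges absolutely and locally uniformly on $[x,\infty)$ by the corollary in Section~\ref{analy}. Since $e^{-t}$ decays exponentially whereas each partial sum grows only polynomially in $t$, the series multiplied by $e^{-t}$ may be integrated term by term over $[x,\infty)$, giving
\[
\int_{x}^\infty e^{-t}\mathcal{R}(a,b,q,\lambda t)\,dt
= \int_{x}^\infty e^{-t}\,dt + \sum_{n=1}^\infty \prod_{j=0}^{n-1}\frac{(a+bq^j)}{n!}\int_{x}^\infty e^{-t}(\lambda t)^n\,dt .
\]

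Next I would evaluate the two ingredients. By property~(\ref{6.1.5}) the first integral equals $e^{-x}=\Gamma(1,x)$, and the $n$-th integral is handled exactly as in Theorem~\ref{6.5.4}, yielding $\Gamma(n+1,\lambda x)$; hence
\[
\int_{x}^\infty e^{-t}\mathcal{R}(a,b,q,\lambda t)\,dt
= \Gamma(1,x) + \sum_{n=1}^\infty \prod_{j=0}^{n-1}\frac{(a+bq^j)}{n!}\,\Gamma(n+1,\lambda x).
\]
The last step is to apply to each summand the scaling identity for the incomplete gamma function, $\Gamma(n,\lambda x)=\lambda^n\sum_{m}\frac{(1-\lambda)^m}{m!}\,\Gamma(n+m,x)$ — the upper-gamma analogue of the identity of \cite{Gautschi} used in Theorem~\ref{6.5.4} — with $n$ replaced by $n+1$. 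Pulling one factor of $\lambda$ out of $\lambda^{n+1}$ and leaving $\lambda^n$ inside the sum reproduces precisely the claimed double series.

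The only genuine point of care is the justification of exchanging the infinite sum with the improper integral over the unbounded interval $[x,\infty)$; this follows from the exponential weight $e^{-t}$ together with the absolute convergence of the Ramanujan series, via monotone/dominated convergence. Everything after that — the reduction of the $n$-th integral and the insertion of the incomplete-gamma scaling identity — is the same bookkeeping carried out in Theorem~\ref{6.5.4}, so I expect no further obstacle; one should, however, record the proof (or a reference) of the upper-gamma scaling identity, together with the precise range of the inner index $m$.
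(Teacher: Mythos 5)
Your proposal follows the paper's own route step for step, and the term-by-term integration you take care to justify (which the paper passes over silently) is fine. The genuine problem is the evaluation of the $n$-th integral, which you import unchanged from Theorem~\ref{6.5.4}: you claim $\int_x^\infty e^{-t}(\lambda t)^n\,dt = \Gamma(n+1,\lambda x)$. This is false. Pulling out the constant gives $\int_x^\infty e^{-t}(\lambda t)^n\,dt = \lambda^n\int_x^\infty t^n e^{-t}\,dt = \lambda^n\,\Gamma(n+1,x)$, whereas $\Gamma(n+1,\lambda x)=\int_{\lambda x}^\infty t^n e^{-t}\,dt$; the two differ for $\lambda\neq 1$ (compare their derivatives in $x$: $-\lambda^n x^n e^{-x}$ versus $-\lambda^{n+1} x^n e^{-\lambda x}$). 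With the correct evaluation the computation terminates immediately at
\[
\int_x^\infty e^{-t}\mathcal{R}(a,b,q,\lambda t)\,dt=\Gamma(1,x)+\sum_{n=1}^\infty\prod_{j=0}^{n-1}\frac{\left(a+bq^j\right)}{n!}\,\lambda^n\,\Gamma(n+1,x),
\]
and there is no occasion to invoke the Gautschi scaling identity at all; the double-sum formula in the statement does not follow from this integral. (The scaling identity would be the right tool if the argument of the incomplete gamma function really were $\lambda x$, e.g.\ for $\int_{\lambda x}^\infty e^{-t}\mathcal{R}(a,b,q,t)\,dt$, but that is a different integral.)

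To be fair, the error originates in the paper: its proofs of both this theorem and Theorem~\ref{6.5.4} make exactly the same substitution, so you have faithfully reproduced the published argument rather than introduced a new mistake. Still, a step that would fail is a gap, and your own closing caveat about recording the upper-gamma scaling identity ``together with the precise range of the inner index $m$'' points at a second soft spot: the Kummer-type expansion $\Gamma(a,\lambda x)=\lambda^{a}\sum_{m}\Gamma(a+m,x)(1-\lambda)^m/m!$ is an infinite series, not a finite sum to $m=n$ or $m=n+1$, so even in a setting where it genuinely applied, the inner summation limits in the stated formula would need repair.
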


\begin{proof}
\begin{eqnarray}
\int_{x}^\infty e^{-t}\mathcal{R}(a,b,q,\lambda t)dt &=& e^{-x} +  
\sum_{n=1}^\infty  \prod_{j=0}^{n-1}\frac{\left(a+bq^j\right)}{n!}\int_{x}^\infty e^{-t}(\lambda t)^n dt\nonumber\\
&=& \Gamma(1,x) + \sum_{n=1}^\infty  \prod_{j=0}^{n-1} \frac {\left(a+bq^j\right)}{n!} \Gamma(n+1,\lambda x). 
\end{eqnarray}
By using the property \cite{Gautschi}
 $\Gamma(n,\lambda x) = \lambda^n\sum_{m=0}^n\frac{\Gamma(n+m, x)}{m!}(1-\lambda)^m$, we obtain
 \begin{equation*}
 \int_{x}^\infty e^{-t}\mathcal{R}(a,b,q,\lambda t)dt = \Gamma (1, x) + 
 \lambda \sum_{n=1}^\infty \sum_{m=0}^{n+1} \prod_{j=0}^{n-1}\left(a+bq^j\right)\frac {\lambda^n}{n!} \frac {\Gamma(n+m+1, x)}{m!}(1-\lambda)^m.
 \end{equation*}
 
\end{proof}

\section{Properties, relations and identities of  $\mathcal{R}_\alpha(a,b,q,x)$}\label{Prop}
\subsection{Properties of  $\mathcal{R}_\alpha(a,b,q,x)$}
For $l, m\in\mathbb{N}$
\begin{enumerate}
\item $\mathcal{R}\left(0,\pm b^m,q,x\right) =  \mathcal{R}\left(0,1,q,\pm b^mx\right)$.
\begin{enumerate}
\item $\mathcal{R}\left(0,\pm a^{l}b^{-m},q,x\right) =  \mathcal{R}\left(0,1,q,\pm a^{l}b^{-m}x\right)$.
\item $\mathcal{R}\left(0,\pm b^{-m},q,x\right) =  \mathcal{R}\left(0,1,q,\pm b^{-m}x\right)$.
\end{enumerate}
\item $\mathcal{R}\left(a^m, a^m,q,x\right) =  \mathcal{R}\left(1,1,q,a^mx\right)$.
\begin{enumerate}
\item $\mathcal{R}\left(\pm a^{-m}, \pm a^m,q, x\right) =  \mathcal{R}\left(1,a^{2m},q,\pm a^{-m}x\right)$.
\item $\mathcal{R}\left(\pm a^m, \pm a^{-m}, q, x\right) =  \mathcal{R}\left(1,a^{-2m},q,\pm a^mx\right)$.
\item $ \mathcal{R}\left(-a^m, a^{-m}, q, x\right) =  \mathcal{R}\left(1,-a^{-2m},q,-a^{m}x\right)$.
\item $ \mathcal{R}\left(a^m, -a^{-m}, q, x\right) =  \mathcal{R}\left(1,-a^{-2m},q,a^{m}x\right)$.
\item $ \mathcal{R}\left(-a^{-m}, a^{m}, q, x\right) =  \mathcal{R}\left(1,-a^{2m},q,-a^{-m}x\right)$.
\item $ \mathcal{R}\left(a^{-m}, -a^{m}, q, x\right) =  \mathcal{R}\left(1,-a^{2m},q,a^{-m}x\right)$.
\end{enumerate}
\item $\mathcal{R}\left(a^l, b^m, q, x\right) =  \mathcal{R}\left(1, a^{-l} b^m,q,a^lx\right)$.
\begin{enumerate}
\item $\mathcal{R}\left(a^l b^{-m}, b^m, q, x\right) =  \mathcal{R}\left(1, a^{-l} b^{2m},q,a^l b^{-m}x\right)$.
\item $\mathcal{R}\left(a^{-l} b^m, b^m, q, x\right) =  \mathcal{R}\left(1, a^l,q,a^{-l} b^{m}x\right)$.
\item $\mathcal{R}\left(a^{l} b^{-m}, a^m, q, x\right) =  \mathcal{R}\left(1, a^{m-l}b^m,q,a^l b^{-m}x\right)$.
\item $\mathcal{R}\left(a^{-l} b^{m}, a^m, q, x\right) =  \mathcal{R}\left(1, a^{l+m}b^{-m},q,a^{-l} b^{m}x\right)$.
\end{enumerate}
\end{enumerate}
\subsection{Relation to Other Functions}
\begin{enumerate}
\item$\mathcal{R}(a, b, q, x) =  1 + \sum_{n=1}^\infty \frac{B(x,n,1)}{(n-1)!}\prod_{j=0}^{n-1}\left(a+bq^j\right)$.
\item$\mathcal{R}(a, b, q, x) =  1 + \sum_{n=1}^\infty \frac{B(x,n,1)}{B(n,1)n!}\prod_{j=0}^{n-1}\left(a+bq^j\right)$.
\item$\mathcal{R}(a, b, q, x) =  1 + \sum_{n=1}^\infty \frac{I_x(n,1)}{n!}\prod_{j=0}^{n-1}\left(a+bq^j\right)$.
\end{enumerate}
\subsection{Contiguous Relations of  $\mathcal{R}_\alpha(a,b,q,x)$}
\begin{Def} \cite{Erdelyi}
Two special functions are said to be contiguous if their parameters $a$ and $b$  differ by integers. The relations made by contiguous functions are said to be contiguous function relations.
\end{Def}
\begin{enumerate}
\item$\mathcal{R}(a\pm 1, b, q, x) = \mathcal{R}( 1, (a \pm 1)^{-1} b, q, (a \pm 1)x)$.
\item$\mathcal{R}(a, b\pm 1, q, x) = \mathcal{R}( 1, a^{-1}(b \pm 1), q, ax)$.
\item$\mathcal{R}(a\pm 1,, b\pm 1, q, x) = \mathcal{R}( 1, (a \pm 1)^{-1}(b \pm 1), q,  (a \pm 1)x)$.
\end{enumerate}

\section{Generalizations}\label{gen}

\subsection{Fractional order delay differential equation}
Consider the fractional delay differential equation with proportional delay 

\begin{equation}
D^\alpha_0y(x) = ay(x)+ by(q x),\quad y(0)= 1, \label{7.1.1}
\end{equation}
where  $0<\alpha \leq 1$, $0<q<1$, $a \in \mathbb{R}$ and $b \in \mathbb{R}$.\\
Equivalently

\begin{equation}
y(x) = 1 + I^\alpha (ay(x)+ by(q x)). \label{7.1.2}
\end{equation}
The DJM solution of (\ref{7.1.2}) is

\begin{equation}
y(x) = 1 + \sum_{n=1}^\infty \frac{x^{\alpha n}}{\Gamma{(\alpha n + 1)}}\prod_{j=0}^{n-1}\left(a+bq^{\alpha j}\right).\label{7.1.3}
\end{equation}

We denote the series in (\ref{7.1.3}) by
\begin{equation}
\mathcal{R}_\alpha(a, b, q, x) =1 + \sum_{n=1}^\infty \frac{x^{\alpha n}}{\Gamma{(\alpha n + 1)}}\prod_{j=0}^{n-1}\left(a+bq^{\alpha j}\right).
\end{equation}

\begin{The}
If $0<q<1$, then the power series 

\begin{equation}
\mathcal{R}_\alpha(a, b, q, x) =1 + \sum_{n=1}^\infty \frac{x^{\alpha n}}{\Gamma{(\alpha n + 1)}}\prod_{j=0}^{n-1}\left(a+bq^{\alpha j}\right),\label{7.1.4}
\end{equation}
 is convergent for all finite values of $x$.

\end{The}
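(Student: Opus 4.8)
The plan is to mimic the ratio-test argument used for the $\alpha=1$ case (the first theorem in Section \ref{analy}), now applied to the series $\mathcal{R}_\alpha(a,b,q,x)$ written as a power series in $x^\alpha$. First I would set
\begin{equation}
c_n = \frac{1}{\Gamma(\alpha n + 1)}\prod_{j=0}^{n-1}\left(a+bq^{\alpha j}\right),\quad n=1,2,\cdots,
\end{equation}
so that the series is $1+\sum_{n\ge1} c_n x^{\alpha n}$. Treating $z = x^\alpha$ as the variable, this is an ordinary power series $1+\sum_{n\ge1} c_n z^n$, and it suffices to show its radius of convergence in $z$ is infinite; since $x \mapsto x^\alpha$ is finite for every finite $x\ge 0$, convergence for all finite $z$ gives convergence for all finite $x$.

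Next I would apply the ratio test to $\{c_n\}$. The key computation is
\begin{equation}
\left|\frac{c_{n+1}}{c_n}\right| = \left|\frac{\prod_{j=0}^{n}\left(a+bq^{\alpha j}\right)}{\prod_{j=0}^{n-1}\left(a+bq^{\alpha j}\right)}\right|\cdot\frac{\Gamma(\alpha n+1)}{\Gamma(\alpha(n+1)+1)} = \left|a + bq^{\alpha n}\right|\cdot\frac{\Gamma(\alpha n+1)}{\Gamma(\alpha n + \alpha + 1)}.
\end{equation}
The first factor is bounded: since $0<q<1$ and $0<\alpha\le 1$ we have $q^{\alpha n}\to 0$, so $|a+bq^{\alpha n}| \to |a|$ and in particular the sequence is bounded by some constant, say $|a|+|b|$. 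The second factor tends to $0$ as $n\to\infty$: this is the standard fact that $\Gamma(\alpha n+1)/\Gamma(\alpha n + \alpha + 1) \sim (\alpha n)^{-\alpha} \to 0$ (for $\alpha>0$), which one can cite from the asymptotics of the Gamma function, or alternatively invoke the Mittag-Leffler function $E_\alpha$ being entire as the prototype. Hence $\lim_{n\to\infty}|c_{n+1}/c_n| = 0$, so $1/R = 0$ and $R = \infty$ in the variable $z=x^\alpha$, and therefore the series converges for all finite $x$.

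The main obstacle — really the only non-routine point — is justifying that $\Gamma(\alpha n + 1)/\Gamma(\alpha n + \alpha + 1)\to 0$, since for $\alpha<1$ the step size in the Gamma argument is fractional and one cannot reduce it to a simple product of integers as in the $\alpha=1$ case. I would handle this either by the ratio asymptotic $\Gamma(s)/\Gamma(s+\alpha)\to 0$ as $s\to\infty$ (a consequence of Stirling's formula, valid for any fixed $\alpha>0$), or, to keep things elementary, by noting that along integer multiples one can bound $\Gamma(\alpha n + \alpha + 1)\ge \Gamma(\alpha n + 1)\cdot(\alpha n + 1)^{\alpha}$ for large $n$ via log-convexity of $\Gamma$. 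Either way the factor is forced to $0$, completing the argument. A brief remark could note that when $\alpha=1$ this recovers the earlier convergence theorem, and that $\mathcal{R}_\alpha$ reduces to $\mathcal{R}$.
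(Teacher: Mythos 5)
Your proposal is correct and follows essentially the same route as the paper: a ratio test on the coefficients, observing that $|a+bq^{\alpha n}|$ stays bounded while $\Gamma(\alpha n+1)/\Gamma(\alpha n+\alpha+1)\to 0$. In fact you are somewhat more careful than the paper, which simply asserts the vanishing of the Gamma-function ratio without the Stirling or log-convexity justification you supply.
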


\begin{proof} Result follows immediately by ratio test 
 
\begin{eqnarray*}
\lim_{n\to\infty}\left| \frac{a_{n+1}}{a_n}\right| &=& a \left|x\right|^\alpha \lim_{n\to\infty}\frac {\Gamma(\alpha n)}{\Gamma{(\alpha (n + 1))}}\\
&=& 0.
\end{eqnarray*}

\end{proof}

\subsubsection{Properties and relations of $\mathcal{R}_\alpha(a,b,q,x)$}
\par We get following analogous properties and relations as in Section \ref{Prop} e.g. 

\begin{enumerate}
\item $\mathcal{R}_\alpha\left(0,\pm b^{\alpha m},q,x\right) =  \mathcal{R}_\alpha\left(0,1,q,\pm b^{\alpha m}x\right)$.
\item $\mathcal{R}_\alpha\left(a^{\alpha m}, a^{\alpha m},q,x\right) =  \mathcal{R}_\alpha\left(1,1,q,a^{\alpha m}x\right)$.
\item $\mathcal{R}_\alpha\left(a^{\alpha l}, b^{\alpha m}, q, x\right) =  \mathcal{R}_\alpha\left(1, a^{-{\alpha m}} b^{\alpha m},q,a^{\alpha l}x\right)$ and so on.
\end{enumerate}

\subsection{System of delay differential equations}
Consider the system of delay differential equation with proportional delay 

\begin{equation}
Y'(x) = AY(x)+ BY(q x),\quad Y(0)= Y_0, \label{7.2.1}
\end{equation}
where  $0<q<1$, $A=\left(a_{ij}\right)_{n\times n}$, $B=\left(b_{ij}\right)_{n\times n}$ and $Y=$
$
\begin{bmatrix}
y_1\\
y_2\\
\vdots\\
y_n
\end{bmatrix}.
$\\
The equivalent integral equation is 
\begin{equation}
Y(x)= Y_0 + \int_{0}^{x}\left( AY(t)+ BY(qt)\right)dt.\label{7.2.2}
\end{equation}

Using DJM, we obtain

\begin{eqnarray}
Y_1(x)&=& \int_{0}^{x}\left( AY(t)+ BY(qt)\right)dt\nonumber\\
&=& (A+B)Y_0\frac{x}{1!},\nonumber\\
Y_2(x)&=& (A+qB)(A+B)Y_0 \frac{x^2}{2!},\nonumber\\
Y_3(x)&=& (A+q^2B)(A+qB)(A+B)Y_0 \frac{x^3}{3!},\nonumber\\
&\vdots& \nonumber\\
Y_n(x)&=& (A+q^{n-1}B)(A+q^{n-2}B)\cdots (A+B)Y_0 \frac{x^n}{n!}. 
\end{eqnarray}

The solution is 
\begin{equation}
Y(x) = \left( I  + \sum_{n=1}^\infty \frac{x^n}{n!}\prod_{j=1}^{n}\left(A+Bq^{n-j}\right)\right)Y_0.\label{7.2.3}
\end{equation}
We denote this series by

\begin{equation}
\overline{\mathcal{R}}(A, B, q, xI) = \left( I  + \sum_{n=1}^\infty \frac{x^n}{n!}\prod_{j=1}^{n}\left(A+Bq^{n-j}\right)\right)Y_0.
\end{equation}

\begin{The}
If $0<q<1$, and $(A+Bq^n)$ is invertible for each $n$, then the  power series 
\begin{equation}
\overline{\mathcal{R}}(A, B, q, xI) = \left( I  + \sum_{n=1}^\infty \frac{x^n}{n!}\prod_{j=1}^{n}\left(A+Bq^{n-j}\right)\right)Y_0,\label{7.2.4}
\end{equation}
where $A\in M_{n\times n}$ and $B\in M_{n\times n}$ is  convergent for $x\in\mathbb{R}$.
\end{The}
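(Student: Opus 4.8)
The plan is to prove convergence of the matrix series by reducing it to a scalar estimate via operator norms, exactly mirroring the scalar ratio-test argument used for $\mathcal{R}(a,b,q,x)$ and $\mathcal{R}_\alpha(a,b,q,x)$ earlier in the paper. First I would fix any submultiplicative matrix norm $\|\cdot\|$ on $M_{n\times n}$ (e.g. the operator norm induced by the Euclidean norm on $\mathbb{C}^n$) and set $\mu = \|A\| + \|B\|$. Writing the $n$-th term of the series as $T_n(x) = \dfrac{x^n}{n!}\Big(\prod_{j=1}^{n}(A+Bq^{n-j})\Big)Y_0$, the triangle inequality together with $0<q<1$ gives
\begin{equation*}
\|T_n(x)\| \le \frac{|x|^n}{n!}\prod_{j=1}^{n}\big(\|A\| + q^{n-j}\|B\|\big)\,\|Y_0\| \le \frac{|x|^n}{n!}\,\mu^n\,\|Y_0\|,
\end{equation*}
since each factor $\|A\|+q^{n-j}\|B\| \le \|A\|+\|B\| = \mu$. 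Thus $\sum_{n\ge 1}\|T_n(x)\| \le \|Y_0\|\sum_{n\ge 1}\frac{(\mu|x|)^n}{n!} = \|Y_0\|\big(e^{\mu|x|}-1\big) < \infty$ for every finite $x$.

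Having the bounding series $\sum_n \frac{(\mu|x|)^n}{n!}$, I would then invoke completeness of $M_{n\times n}$ (it is a finite-dimensional Banach space under $\|\cdot\|$): an absolutely convergent series in a Banach space converges. Hence $I + \sum_{n=1}^\infty T_n(x)$ converges for all $x\in\mathbb{R}$, and so does $\overline{\mathcal{R}}(A,B,q,xI)$, being this limit applied to (or rather, already incorporating) $Y_0$. The convergence is in fact uniform on every compact subset of $\mathbb{R}$ by the Weierstrass $M$-test with $M_n = \|Y_0\|\,(\mu r)^n/n!$ on $|x|\le r$, which also legitimizes termwise differentiation and confirms that the series genuinely solves (\ref{7.2.1}); I would state this as a remark. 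Alternatively, one can run the ratio test directly: $\|T_{n+1}(x)\|/\|T_n(x)\|$ is, up to the harmless norm factors, bounded by $\dfrac{|x|}{n+1}\big(\|A\|+q^{n}\|B\|\big) \to 0$, giving the same conclusion in the style already used in the paper.

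One technical point worth flagging: the product $\prod_{j=1}^{n}(A+Bq^{n-j})$ is a product of non-commuting matrices, so the order matters for the value of $\overline{\mathcal{R}}$, but it is irrelevant for the norm bound because submultiplicativity $\|XY\|\le\|X\|\,\|Y\|$ holds regardless of order. So no difficulty arises there. The hypothesis that $A+Bq^n$ is invertible for each $n$ is not actually needed for convergence — it would be used only if one wanted to establish identities analogous to the derivative/product relations in Section \ref{analy} (where one divides by such factors), so I would simply note that the convergence statement holds without it, and the invertibility assumption is retained only for consistency with the later structural results. The main "obstacle" is thus merely bookkeeping: being careful that the norm of a product of the shifted matrices telescopes cleanly into $\mu^n$ and that the $Y_0$ factor is carried along — there is no real analytic difficulty, the estimate being dominated by the exponential series just as in the scalar case.
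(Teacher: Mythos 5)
Your proof is correct, but it takes a genuinely different route from the paper's. The paper applies a matrix ratio test (citing Jeodar--Sastre): it forms the product of the $(n+1)$-th coefficient with the \emph{inverse} of the $n$-th, observes that by the ordering of the factors this telescopes exactly to $\frac{|x|}{n+1}\left\| A+Bq^{n}\right\|$, and lets $n\to\infty$. This is where the hypothesis that each $A+Bq^{n}$ is invertible enters --- without it the quotient $P_{n+1}P_n^{-1}$ is not even defined. Your argument instead bounds each term directly, $\left\| T_n(x)\right\| \le \frac{(\mu |x|)^n}{n!}\left\| Y_0\right\|$ with $\mu=\|A\|+\|B\|$, using only submultiplicativity and the triangle inequality, and concludes by absolute convergence in the finite-dimensional Banach space $M_{n\times n}$. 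What your approach buys is twofold: it dispenses with the invertibility hypothesis entirely (you are right that it is not needed for convergence, only for the contiguous-type identities where one divides by such factors), and it yields the stronger conclusions of absolute convergence and uniform convergence on compacta, which the paper's ratio-test argument does not explicitly deliver and which are needed anyway to justify termwise differentiation. Your parenthetical alternative ``ratio test on the norms'' should be treated with care, since $\|T_n(x)\|$ could vanish for particular $Y_0$ even when the matrices are invertible, but as your main argument does not rely on it, the proof stands.
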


\begin{proof} By ratio test \cite{Lucas},
 
\begin{eqnarray*}
&&\left\|\left(\frac{x^{n+1}}{(n+1)!}\prod_{j=1}^{n+1}\left(A+Bq^{n+1-j}\right)\right)\left(\frac{x^n}{n!}\prod_{j=1}^{n}\left(A+Bq^{n-j}\right)\right)^{-1}\right\|\\
&=&\frac{\mid x \mid}{n+1} \left\|\left(\prod_{j=1}^{n+1}\left(A+Bq^{n+1-j}\right)\right)\left(\prod_{j=1}^{n}\left(A+Bq^{n-j}\right)\right)^{-1}\right\| \\
&=&\frac{\mid x \mid}{n+1} \left\| A + Bq^n\right\|\\
&=& 0,\quad \textrm{as}\;n \longrightarrow \infty.
\end{eqnarray*}
$\therefore$ The power series  (\ref{7.2.4}) is convergent for $x\in\mathbb{R}$.
\end{proof}
\subsection{Properties and relations of $\overline{\mathcal{R}}(A,B,q,xI)$}
\par We get following analogous properties and relations as in Section \ref{Prop}  under the condition  that $A$, $B$, $A\pm I$ and $B\pm I$ are invertible.
\subsubsection{Properties of $\overline{\mathcal{R}}(A,B,q,xI)$}
\begin{enumerate}
\item $\overline{\mathcal{R}}\left(0,\pm B^m,q,xI\right) =  \overline{\mathcal{R}}\left(0,I,q,\pm B^mx\right)$.
\item $\overline{\mathcal{R}}\left(A^m, A^m,q,xI\right) =  \overline{\mathcal{R}}\left(I,I,q,A^mx\right)$, etc.
\end{enumerate}

\subsubsection{Contiguous Relations of $\overline{\mathcal{R}}(A,B,q,xI)$}
\begin{enumerate}
\item$\overline{\mathcal{R}}\left(A\pm I, B, q, xI\right) = \overline{\mathcal{R}}\left(I, (A \pm I)^{-1} B, q, (A \pm I)x\right)$.
\item$\overline{\mathcal{R}}\left(A, B\pm I, q, xI\right) = \overline{\mathcal{R}}\left(I, A^{-1}(B \pm I), q, Ax\right)$.
\item$\overline{\mathcal{R}}\left(A\pm I,, B\pm I, q, xI \right) = \overline{\mathcal{R}} \left(I, (A \pm I)^{-1}(B \pm I), q,  (A \pm I)x\right)$.
\end{enumerate}

\section{Conclusions}\label{concl}
In this article, we have presented a series solution for a  differential equation with proportional delay. It is shown that the series is analytical on  $\mathbb{R}$. The novel function $\mathcal{R}(a,b,q,x)=  1 + \sum_{n=1}^\infty \frac{x^n}{n!}\prod_{j=0}^{n-1}\left(a+bq^j\right)$ is independent from hypergeometric functions because it is obtained as a solution of DDE where as hypergeometric functions are obtained from ODEs. It is observed that the $n^\textrm{th}$ derivative of this function can be represented as the linear combination of $ \mathcal{R}(a,b,q,q^r x)$  for $0\leq r \leq n$ with Gaussian binomial coefficients. The function $\mathcal{R}$ is shown to be bounded by exponential functions. The new function $\mathcal{R}$ shows relation with Kummer's functions, with generalized  Laguerre polynomials and with incomplete gamma function when integrated with a kernel $e^{-t}$. Further, we have discussed some identities and contiguous relations of $\mathcal{R}$. We also have generalized this function by solving fractional order DDE and a system of DDEs. We hope that our work will encourage researchers to dig for more properties of this new function and to work on DDEs with proportional delay.

\textbf{Acknowledgements:}\\
S. Bhalekar acknowledges CSIR, New Delhi for funding through Research Project [25(0245)/15/EMR-II].

\end{document}